\newcommand{\Gr}{{\rm gph\,}}
\newcommand{\co}{{\rm conv\,}}
\newcommand{\J}{{\cal J}}
\newcommand{\Lsp}{{\cal L}}
\newcommand{\Lb}{\bar\Lambda}
\newcommand{\Lbv}{\bar\Lambda(v)}
\newcommand{\lb}{{\bar\lambda}}
\newcommand{\R}{\mathbb{R}}
\newcommand{\mv}{\,\vert\, }
\newcommand{\xb}{\bar x}
\newcommand{\yb}{\bar y}
\newcommand{\zb}{\bar z}
\newcommand{\vb}{\bar v}
\newcommand{\KbG}{{\bar K_\Gamma}}
\newcommand{\Span}{\mathop{\rm span\,}\limits}
\newcommand{\argmax}{\mathop{\rm arg\,max\,}\limits}
\begin{document}

\title*{Constraint qualifications and optimality conditions in bilevel optimization}
\author{Jane J. Ye}
\institute{Jane J. Ye \at Department of Mathematics and Statistics, University of Victoria, Canada,  \email{janeye@uvic.ca}. 
}
%
%
\maketitle


\abstract{In this paper we study constraint qualifications and optimality conditions for bilevel programming problems. We strive to derive checkable constraint qualifications in terms of problem data and applicable optimality conditions. For the bilevel program with convex lower level program we discuss  drawbacks of  reformulating  a bilevel programming problem by  the mathematical program with complementarity constraints and present a new sharp necessary optimality condition for the reformulation by the mathematical program with a generalized equation constraint. For the bilevel program with a nonconvex lower level program we propose a relaxed constant positive linear dependence (RCPLD) condition for the combined program.}

\section{Introduction}
\label{sec:1}
In this paper we onsider the following bilevel program:
\begin{eqnarray*}
({\rm BP})~~~~~~\min && F(x,y)\nonumber\\
{\rm s.t.} && y\in S(x), \ 
 G(x,y)\leq 0,\ H(x,y)= 0,
\end{eqnarray*}
where  $S(x)$ denotes the solution set of the lower level program
\begin{eqnarray*}
({\rm P}_x)~~~~~~~~
\min_{y\in \Gamma (x)} \  f(x,y),
\end{eqnarray*}
where $\Gamma(x):=\{y\in \mathbb{R}^m: g(x,y)\leq 0, h(x,y)=0\}$ is the feasible region of the lower level program, 
 $F:\mathbb{R}^n\times \mathbb{R}^m \rightarrow \mathbb{R}$, $G:\mathbb{R}^n\times \mathbb{R}^m \rightarrow \mathbb{R}^p$ and  $H:\mathbb{R}^n\times \mathbb{R}^m \rightarrow \mathbb{R}^q$ 
 $f:\mathbb{R}^n\times \mathbb{R}^n \rightarrow \mathbb{R}$, $g:\mathbb{R}^n\times \mathbb{R}^m \rightarrow \mathbb{R}^r, h:\mathbb{R}^n\times \mathbb{R}^m \rightarrow 
 \mathbb{R}^s$. Throughout the paper, for simplicity we assume that $S(x)\not =\emptyset$ for all $x$.

 In economics literature, a bilevel program is sometimes referred to as a Stackelberg game due to the introduction of the concept  by Stackelberg \cite{v}.  Although it can be used to model a game between the leader and the follower of a two level hierarchical system, the bilevel program has been used to model much wider range of  applications; see e.g. \cite{d1,d2}. Recently, it has been applied to  hyper-parameters selection  in machine learning; see e.g. \cite{K,L}. 
 
 The classical approach or the first order approach to study optimality conditions for bilevel programs is to replace the lower level problem by its Karush-Kuhn-Tucker (KKT) conditions and minimize over the original variables as well as the multipliers. The resulting problem is a so-called mathematical program with complementarity constraints or mathematical program with equilibrium constraints. The class of mathematical program with complementarity/equilibrium constraints has been studied intensively in the last three decades; see e.g. \cite{Luo-Pang-Ralph,Out-Koc-Zowe} and the reference within. 
 
 There are two issues involved in using the first order approach. Firstly, since the KKT condition is only a sufficient but not necessary condition for optimality, the first order approach can only be used when the lower level problem is a convex program. Secondly, even when the lower level is a convex program if the lower level problem has more than one multiplier, the resulting problem is not equivalent to the original bilevel program if local optimality is considered. In this paper we discuss these issues and present some strategies to deal with this problem. These strategies including using the value function approach, the combined approach and the generalized equation approach. 

For a stationary condition to hold at a local optimal solution, usually certain constraint qualifications are required to hold. There are some weak constraint qualifications which are not checkable since they are defined implicitly; e.g. Abadie constraint qualification. In this paper we concentrate on only those checkable constraint qualifications. 

 The following {notation} will be used throughout the paper.  We denote by  $B(\xb; \delta)$  the closed ball centered at $\xb$ with radius $\delta$ and by $B$ the closed unit ball centered at $0$.  We denote by  $\mathbb{B}_\delta(\xb)$  the open ball centered at $\xb$ with radius $\delta$. For a matrix
$A$, we denote by $A^T$ its transpose. 
The inner product of two vectors $x, y$ is denoted by
$x^T y$ or $\langle x,y\rangle$ and by
 $x\perp y$ we mean $\langle x, y\rangle =0$. 
The polar cone of a set $\Omega$ is
$\Omega^\circ=\{x|x^Tv\leq 0 \ \forall v\in \Omega\}$.
For a set $\Omega$, we denote by $\co \Omega$ the convex hull
 of $\Omega$. For a differentiable mapping $P:\mathbb R^d\rightarrow \mathbb R^s$, we denote by $\nabla P(z)$ the Jacobian matrix of $P$ at $z$ if $s>1$ and the gradient vector if $s=1$. For a function $f:\R^d \rightarrow \R$, we denote by $\nabla^2 f(\bar z)$ the Hessian matrix of $f$ at $\bar z$.  Let $M:\R^d\rightrightarrows\R^s$ be an arbitrary set-valued mapping. We denote its graph by $ {\rm gph}M:=\{(z,w)| w\in M(z)\}.$ $o:\R_+\rightarrow \R$ denotes a function with the property that $o(\lambda)/\lambda\rightarrow 0$ when $\lambda\downarrow 0$. By $z_{k}\stackrel{\Omega}{\to}z$ we mean that  $z_k\in \Omega $ and $z_k\rightarrow z$.
\section{Preliminaries on variational analysis}
In this section, we gather some preliminaries  in variational analysis and optimization theories that will be needed in the paper. The reader may find more details in the monographs \cite{Clarke,Mor,RoWe98} and  in the papers we refer to.

\begin{definition}[Tangent cone and normal cone]
Given a set
$\Omega\subseteq \mathbb R^d$ and a point $\bar z\in\Omega$,
the (Bouligand-Severi) {\em tangent/contingent cone} to $\Omega$
at $\bar z$ is a closed cone defined by
\begin{equation*}\label{normalcone}
T_\Omega(\bar z)
:=\limsup_{t\downarrow 0}\frac{\Omega-\bar z}{t}
=\Big\{u\in\mathbb R^d\Big|\;\exists\,t_k\downarrow
0,\;u_k\to u\;\mbox{ with }\;\bar z+t_k u_k\in\Omega ~~\forall ~ k \Big\}.
\end{equation*}
The (Fr\'{e}chet) {\em regular normal cone} and the (Mordukhovich) {\em limiting/basic normal cone} to $\Omega$ at $\bar
z\in\Omega$ are  closed cones defined by
\begin{eqnarray}
&& \widehat N_\Omega(\bar z):=(T_\Omega(\bar z))^\circ\nonumber\\
\mbox{and }  &&
N_\Omega(\bar z):=\left \{z^\ast \in \mathbb{R}^d\mv \exists z_{k}\stackrel{\Omega}{\to}\zb \mbox{ and } z^\ast_k\rightarrow z^\ast \mbox{ such that } z^\ast_{k}\in \widehat{N}_{\Omega}(z_k) \  \forall k \right \},
\nonumber
\end{eqnarray}
respectively.
\end{definition}
When  the set $\Omega$ is convex,  the regular and the limiting normal cones are equal and  reduce to
the classical  normal cone of convex analysis, 
 i.e.,
 $$N_\Omega(\bar z):=\{z^\ast| \langle z^*, z-\bar z\rangle \leq 0 \quad \forall z\in \Omega\} .$$

We now give definitions for subdifferentials.
\begin{definition}[Subdifferentials] Let $f:\mathbb{R}^d \rightarrow   \bar{ \mathbb{R}}$ be an extended value function, $\bar x\in \mathbb{R}^d$ and   $f(\bar x)$  is finite. The regular subdifferential of $f$ at $\xb$ is the set defined by
$$\widehat{\partial } f(\xb):=\{ v\in \mathbb{R}^d| f(x)\geq f(\xb) +\langle v, x-\xb\rangle +o (\|x-\xb\|) \}.$$
The limiting subdifferential of $f$ at $\xb$ is the set defined by
$$\partial f(\xb):=\{ v\in \mathbb{R}^d|  v=\lim_k  v_k, v_k\in  \widehat{\partial } f(x_k), x_k\rightarrow \xb, f(x_k)\rightarrow f(\xb) \}.$$
Suppose that $f$ is Lipschitz continuous at $\xb$. Then the Clarke subdifferential of $f$ at $\bar x$ is the set defined by
$$\partial^c f(\xb)=conv \partial f(\xb).$$
\end{definition}
When  the function $f$  is convex, all the subdifferentials defined above are equal and reduce to
the classical subgradient of convex analysis, i.e.,
$${\partial } f(\xb):=\{ v\in \mathbb{R}^d| f(x)\geq f(\xb) +\langle v, x-\xb\rangle \}.$$

\begin{definition}[Coderivatives] 	For a set-valued map $\Phi:\mathbb{R}^d \rightrightarrows \mathbb{R}^s$ and a point $(\bar x, \bar y)\in {\rm gph} \Phi$,
  the Fr\'{e}chet coderivative  of $\Phi$ at $(\bar x,\bar y)$ is a multifunction $\widehat D^*\Phi(\bar x,\bar y):\mathbb{R}^s \rightrightarrows\mathbb{R}^d$ defined as
    $$\widehat D^*\Phi(\bar x,\bar y)(w):=\left\{\xi\in\mathbb{R}^d|(\xi,-w)\in \widehat N_{{\rm gph} \Phi}(\bar x,\bar y) \right\}.$$
	And the limiting (Mordukhovich) coderivative of $\Phi$ at $(\bar x,\bar y)$ is a multifunction $D^*\Phi(\bar x,\bar y):\mathbb{R}^s
	\rightrightarrows \mathbb{R}^d$ defined as
	$$D^*\Phi(\bar x,\bar y)(w):=\left\{{\xi}\in\mathbb{R}^d|({\xi},-w)\in N_{{\rm gph} \Phi}(\bar x,\bar y) \right\}.$$
\end{definition}
We now review some concepts of stability of a set-valued map.

\begin{definition}[Aubin \cite{Aubin1984Lipschitz}] Let  $\Sigma : \R^{n}\rightrightarrows \R^{d}$ be a set-valued map and $(\bar{\alpha}, \bar x)\in {\hbox{gph}} \Sigma$. We say that $\Sigma$ is  pseudo-Lipschitz continuous    at  $(\bar{\alpha}, \bar x)$ if there exist a neighborhood $\mathbb{V}$ of $\bar{\alpha}$, a neighborhood $\mathbb{U}$ of $\bar x$ and $\kappa\geq 0$ such that
	{\rm{\begin{equation*}
			\Sigma \left(\alpha\right) \cap \mathbb{U} \subseteq \Sigma \left( \alpha^\prime \right) + \kappa \left\| \alpha^\prime - \alpha \right\|  B, \ \ \forall \alpha^\prime,\alpha\in \mathbb{V}.
			\end{equation*}}}
\end{definition}
\begin{definition}[Robinson
	\cite{Robinson1975Stability}] Let $\Sigma : \R^{n}\rightrightarrows \R^{d}$ be a set-valued map and  {$\bar{\alpha}\in \R^n$}. We say that $\Sigma$ is  upper-Lipschitz continuous    at  {$\bar{\alpha}$}  if there exist a neighborhood $\mathbb{V}$ of $\bar{\alpha}$ and $\kappa\geq 0$ such that
	{\rm{\begin{equation*}
			\Sigma \left(\alpha\right) \subseteq \Sigma \left(\bar {\alpha}\right) + \kappa \left\| \alpha - \bar{\alpha} \right\| B,\ \ \forall \alpha\in \mathbb{V}.
			\end{equation*}}}
\end{definition}
\begin{definition}[Ye and Ye \cite{YeYe}] Let $\Sigma : \R^{n}\rightrightarrows \R^{d}$ be a set-valued map and $(\bar{\alpha}, \bar x)\in {\hbox{gph}} \Sigma$. 
 We say that $\Sigma$ is   calm (or pseudo upper-Lipschitz continuous)  at $(\bar{\alpha}, \bar x)$  if there exist a neighborhood $\mathbb{V}$ of $\bar \alpha$, {a neighborhood $\mathbb{U}$ of $\bar x$} and $\kappa\geq 0$ such that
	{\rm{\begin{equation*}\label{calmness-defi}
			\Sigma \left(\alpha\right)\cap \mathbb{U} \subseteq \Sigma \left(\bar{\alpha}\right) + \kappa \left\| \alpha- \bar{\alpha}  \right\| B,\ \ \forall \alpha {\in \mathbb{V}}.
			\end{equation*}}}
\end{definition}
Note that the terminology of calmness was suggested by Rockafellar and Wets in \cite{RoWe98}. 

It is clear that both the pseudo-Lipschitz continuity and the upper-Lipschitz continuity are stronger than the pseudo upper-Lipschitz continuity. 
It is obvious that if $\Sigma: \R^{n}\rightarrow \R^{d}$ is a continuous single-valued map, then the pseudo-Lipschitz continuity  at  $(\bar{\alpha}, \bar x)$ reduces to the Lipschitz continuity at $\bar \alpha$, while   the calmness/pseudo upper-Lipschitz
continuity reduces to the calmness at $\bar \alpha$, i.e., there exist a neighborhood $\mathbb{V}$ of $\bar \alpha$ and a constant $\kappa\geq 0$ such that
$$ \| \Sigma(\alpha)-\Sigma(\bar \alpha)\|\leq \kappa \|\alpha-\bar \alpha\| \quad \forall \alpha {\in \mathbb{V}}.$$ 
Hence it is easy to see that the calmness/pseudo upper-Lipschitz
continuity is a much weaker stability condition than the  pseudo-Lipschitz continuity condition.

Many optimization problems can be written in the following form:
\begin{equation}
\min_x f(x) \quad \mbox{ s.t. } 0\in \Phi(x), \label{op} \end{equation}
where $f:\R^{d}\rightarrow \R$ is Lipschitz continuous around the point of interest and $\Phi:\R^{d}\rightrightarrows \R^{n}$ is a set-valued map with a closed graph.

Let $\bar x$ be a feasible solution for the above optimization problem. We say that  Mordukhovich (M-) stationary condition holds at $\bar x$  if  there exists $\eta$ such that
\begin{eqnarray}
&& 0\in \partial f(\bar x)+D^*\Phi(\bar x, 0)(\eta),\label{KKT}
\end{eqnarray}
respectively.
 
We now discuss the constraint qualifications under which a local optimal solution $\bar x$ satisfies the M-stationary condition.
For this purpose we consider the perturbed feasible solution mapping
\begin{equation} \Sigma(\alpha):=\Phi^{-1}(\alpha)=\{ x \in \R^d|  \alpha \in \Phi(x)\}.\label{feasible} \end{equation}
The property of the { calmness} of set-valued map $\Sigma(\cdot) $ at $( 0, \xb)\in \Gr \Sigma$ is equivalent with the property of the 
metric subregularity of its inverse map $\Sigma^{-1}(x)=\Phi(x)$ at $(\xb, 0)$, cf. \cite{DoRo04}. This justifies the terminology defined below.
\begin{definition} Let $0\in \Phi( \bar x)$. 
We say that 
  the metric subregularity constraint qualification (MSCQ) holds at $\bar x$ if the perturbed feasible solution mapping defined by (\ref{feasible}) is calm at $(0, \bar x)$.
\end{definition}
\begin{theorem}(Ye and Ye \cite[Theorem 3.1]{YeYe}) Let $\bar x $ be a local optimal solution of problem (\ref{op}). Suppose that  MSCQ holds at  $\bar x$. Then the M-stationary condition (\ref{KKT}) holds at $\bar x$. 
\end{theorem}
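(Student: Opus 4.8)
The plan is to combine an exact penalization of the constraint $0\in\Phi(x)$ with the generalized Fermat rule, after lifting everything to the graph of $\Phi$ so that no distance-function subdifferential has to be computed. First I would turn MSCQ into a local error bound: by the equivalence (cited above, cf. \cite{DoRo04}) between calmness of $\Sigma=\Phi^{-1}$ at $(0,\bar x)$ and metric subregularity of $\Phi$ at $(\bar x,0)$, there are $\kappa\geq 0$ and $\delta>0$ with $d(x,\F)\leq\kappa\, d(0,\Phi(x))$ for all $x$ in a $\delta$-ball around $\bar x$, where $\F:=\Phi^{-1}(0)$ is the feasible set (closed, since $\Gr\Phi$ is). Since $f$ is Lipschitz near $\bar x$ with some modulus $L$ and $\bar x$ minimizes $f$ over $\F$ locally, I would run the usual closest-point estimate: for $x$ near enough to $\bar x$ pick $u\in\F$ with $\|x-u\|=d(x,\F)$ (then $u\to\bar x$ as $x\to\bar x$, so $u$ lies in the optimality neighborhood), whence $f(x)\geq f(u)-L\, d(x,\F)\geq f(\bar x)-L\, d(x,\F)$; combined with the error bound this gives $f(x)+L\kappa\, d(0,\Phi(x))\geq f(\bar x)$ for all $x$ in a neighborhood of $\bar x$.

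The key step is to reinterpret this last inequality on $\Gr\Phi$. Because $\|y\|\geq d(0,\Phi(x))$ whenever $y\in\Phi(x)$, it shows that $(\bar x,0)$ is a local minimizer of $(x,y)\mapsto f(x)+L\kappa\|y\|$ over $(x,y)\in\Gr\Phi$ — equivalently, of the proper lsc function $(x,y)\mapsto f(x)+L\kappa\|y\|+\delta_{\Gr\Phi}(x,y)$ with no constraints. Applying the generalized Fermat rule, then the limiting sum rule (legitimate since $(x,y)\mapsto f(x)+L\kappa\|y\|$ is locally Lipschitz and $\Gr\Phi$ is closed, so no qualification is needed), together with the elementary inclusion $\partial(f(x)+L\kappa\|y\|)(\bar x,0)\subseteq\partial f(\bar x)\times L\kappa\,\partial\|\cdot\|(0)=\partial f(\bar x)\times L\kappa B$, I obtain
\[
0\in\bigl(\partial f(\bar x)\times L\kappa B\bigr)+N_{\Gr\Phi}(\bar x,0).
\]
Hence there exist $v\in\partial f(\bar x)$ and $\eta$ with $\|\eta\|\leq L\kappa$ such that $(-v,-\eta)\in N_{\Gr\Phi}(\bar x,0)$, i.e. $-v\in D^*\Phi(\bar x,0)(\eta)$, and therefore $0\in\partial f(\bar x)+D^*\Phi(\bar x,0)(\eta)$, which is exactly (\ref{KKT}).

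The one place that calls for care is the exact-penalization step: one must line up the neighborhood on which $\bar x$ is optimal, the one on which the error bound holds, and the one on which $f$ is Lipschitz, and — crucially — use that $f$ is Lipschitz rather than merely lsc, since it is the modulus $L$ that keeps the penalty term finite. The nonsmoothness of $y\mapsto\|y\|$ at the origin is not an obstacle: it is precisely what lets the multiplier $\eta$ range over a ball rather than being pinned down, and it is absorbed cleanly by $\partial\|\cdot\|(0)=B$; as a byproduct the argument even yields the bound $\|\eta\|\leq L\kappa$ on the multiplier.
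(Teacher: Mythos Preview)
The paper does not supply its own proof of this theorem; it is quoted as background from \cite[Theorem~3.1]{YeYe}. Your argument is correct and follows the standard exact-penalization route that underlies the original result: turn MSCQ into a local error bound, penalize to make $\bar x$ an unconstrained local minimizer, lift to $\Gr\Phi$ to avoid differentiating $x\mapsto d(0,\Phi(x))$, and finish with the Fermat rule plus the Lipschitz-plus-lsc sum rule. Every step is justified under the paper's standing hypotheses (closed graph, $f$ locally Lipschitz), and the bonus bound $\|\eta\|\leq L\kappa$ on the multiplier drops out naturally from $\partial\|\cdot\|(0)=B$.
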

In the case where $\Phi(x):=(h(x), g(x)+\mathbb{R}_+^r)$
with $g:\mathbb{R}^n \rightarrow \mathbb{R}^r, h:\mathbb{R}^n \rightarrow 
 \mathbb{R}^s$ being  smooth, problem (\ref{op}) is the nonlinear program with equality and inequality constraints and (\ref{KKT}) reduces to the KKT condition for the nonlinear program. We say that a feasible solution $\bar x$ of problem (\ref{op}) satisfies the linear independence constraint qualification (LICQ) if 
 the gradients 
 $$  \{ \nabla g_i(\bar x)\}_{i\in I_g}  \cup \{ \nabla h_i(\bar x)\}_{i=1}^s , \quad  I_g=I_g(\bar x):=\{i| g_i(\bar x)=0\}$$
 are linearly independent. We say that the the positive linear independence constraint qualification (PLICQ) or no nonzero abnormal constraint qualification  (NNAMCQ) holds at $\bar x$ if there is no nonzero vector $(\eta^h,\eta^g)$ such that
 $$0= \sum_{i\in I_g} \nabla g_i(\bar x) \eta^g_i  +\sum_{i=1}^s \nabla h_i(\bar x) \eta^h_i,\quad \eta^g \geq 0 .$$
 By an alternative theorem, it is well-known that PLICQ/NNAMCQ is equivalent to the
 Mangasarian Fromovitz constraint qualification (MFCQ): 
 the gradients 
 $   \{ \nabla h_i(\bar x)\}_{i=1}^s $ is linearly independent and $\exists v\in \mathbb{R}^n $ such that
 \begin{equation*}
  \nabla g_i (\bar x)^T v<0 \  \forall i \in I_g,\quad
 \nabla h_i(\bar x)^Tv =0 \ \forall i =1,\dots, s .
 \end{equation*}
LICQ is stronger than MFCQ which is equivalent to saying that the perturbed feasible solution map $\Sigma(\cdot)$ is pseudo-Lipschitz continuous at $(0,\bar x)$ and hence stronger than the MSCQ/calmness condition.

We will also need the following definition.
\begin{definition}[Generalized linearity space] Given an arbitrary set $C\subseteq \R^d$, we call a subspace $L$ the generalized linearity space of $C$ and denote it by  $\Lsp(C)$ provided that it is   the largest subspace $L\subseteq \R^d$ such that
$C+L\subseteq C.$
\end{definition}
 In the case where $C$ is a convex cone,  the linearity space  of $C$ is the largest subspace contained in $C$ and can be calculated as $\Lsp(C)=(-C)\cap C$.

\section{Bilevel program with convex lower level program}
In this section we consider the case where given $x$ the lower level problem $({\rm P}_x)$ is a convex program. We first discuss the challenges for such a problem and follow by considering two special cases where the first one is a problem where the lower level problem is completely linear and the second one is a problem where the lower level constraint is independent of the upper level  variable.

To concentrate the main idea, for  simplicity  in this section we omit the upper level constraints and lower level equality constraints and consider 
\begin{eqnarray*}
({\rm BP})_1~~~~~~\min_{x,y} && F(x,y)\nonumber\\
{\rm s.t.} && y \in 
\arg\min_{y'} \{  f(x,y') |g(x,y')\leq 0\},
\end{eqnarray*}
where $g(x,y)$ is either affine in $y$ or convex in $y$ and the Slater condition holds, i.e., for each $x$ there is $y(x)$ such that $g(x,y(x))<0$. We assume that 
 $F:\mathbb{R}^n\times \mathbb{R}^m \rightarrow \mathbb{R}$, is continuously differentiable, 
 $f:\mathbb{R}^n\times \mathbb{R}^m \rightarrow \mathbb{R}$, $g:\mathbb{R}^n\times \mathbb{R}^m \rightarrow \mathbb{R}^r$ are twice continuously differentiable  in variable $y$.

Under the assumptions we made, the  KKT condition is necessary and sufficient for optimality. So 
$$y\in S(x) \Longleftrightarrow \exists \lambda \quad \mbox{ s.t. } 0=\nabla_yf(x,y)+\nabla_y g(x,y)^T \lambda, \quad 0\leq -g(x,y)\perp \lambda \geq 0.$$ 
 A common approach in the bilevel program literature is to replace ``$\exists \lambda$''   by ``$\forall \lambda$'' in the above and hence consider 
solving the following mathematical program with complementarity constraints (MPCC) instead. 
\begin{eqnarray*}
({\rm MPCC})~~~~~~\min_{x,y,\lambda} && F(x,y)\nonumber\\
{\rm s.t.} && 0=\nabla_yf(x,y)+\nabla_y g(x,y)^T \lambda,\\
&& 0\leq -g(x,y)\perp \lambda \geq 0.
\end{eqnarray*}
Problem (MPCC) looks like a standard mathematical program. However if one treats it as a mathematical program with equality and inequality constraints, then the usual constraint qualification such as MFCQ fails at each feasible solution (see Ye and Zhu \cite[Proposition 3.2]{YeZhuOp}). This observation leads to the introduction of weaker stationary conditions such as Weak (W-), Strong (S-), Mordukhovich (M-) and Clarke (C-) stationary conditions for (MPCC); see e.g. Ye \cite{Ye} for more discussions. We recall the definitions of various stationary conditions there.

\begin{definition}[Stationary conditions for MPCC]  Let $(\xb,\yb, \bar \lambda)$ be a feasible solution for  problem (MPCC). We say that 
$(\xb,\yb, \bar \lambda)$ is a weak stationary point of (MPCC) if there exist $w\in \R^m, \xi\in \R^r$ such that
  \begin{eqnarray}
  &&0=\nabla_x F(\xb,\yb)-\nabla_{yx}^2 f(\xb,\yb)w-\nabla^2_{yx}(\lb^Tg)(\xb, \yb)w+\nabla_xg(\xb,\yb)^T \xi,\nonumber \\
  &&0=\nabla_y F(\xb,\yb)-\nabla_{yy}^2 f(\xb,\yb)w-\nabla^2_{yy}(\lb^Tg)(\xb, \yb)w+\nabla_y
   g(\xb,\yb)^T\xi, \nonumber\\
  && \xi_i=0 \  \mbox{ if } g_i(\xb,\yb)<0, \lb_i=0, \label{EqOptCond-MPCC_c}\\
    && \nabla_y g_i(\xb,\yb)^T w=0 \ \mbox{ if } g_i(\xb,\yb)=0, \lb_i >0. \label{EqOptCond-MPCC_d}
  \end{eqnarray}
  We say that 
$(\xb,\yb, \bar \lambda)$ is a S-, M-, C- stationary point of (MPCC) if there exist $w\in \R^m, \xi\in \R^r$ such that the above conditions and  the following condition holds
\begin{eqnarray}
&& \xi_i\geq 0,   \nabla_y g_i(\xb,\yb)^T w\leq 0 \  \mbox{ if } g_i(\xb,\yb)=\lb_i=0,\nonumber\\
&&\mbox{either } \xi_i>0,   \nabla_y g_i(\xb,\yb)^T w<0  \mbox{ or } \xi_i\nabla_y g_i(\xb,\yb)^T w=0\  \mbox{ if } g_i(\xb,\yb)=\lb_i=0,\label{EqOptCond-MPCC_f} \\
&&  \xi_i\nabla_y g_i(\xb,\yb)^T w\leq 0\  \mbox{ if } g_i(\xb,\yb)=\lb_i=0, \nonumber 
\end{eqnarray}
respectively.
\end{definition}

For a mathematical program, it is well-known that under certain constraint qualification, a local optimal solution must be a stationary point and hence a stationary point is a candidate for a local optimal solution. Unfortunately as pointed out by Dempe and Dutta in \cite{Dam-Dut}, this is not true for bilevel programs even when the lower level is convex. Precisely, it is possible that $(\xb,\yb,\lb)$ is a local optimal solution of (MPCC) but  $(\xb,\yb)$ is not a local optimal solution of $(BP)_1$. Note that since (MPCC) is a nonconvex program, one usually only hope to find a local optimal solution and hence this is  very bad news. This observation  indicates that extreme care should be taken when  using MPCC reformulation in the case where the lower level problem has non-unique multipliers. 

\subsection{The bilevel program where the lower level program is completely linear}
We now discuss the special case of $(BP)_1$ where the lower level program is completely linear. That is, $f(x,y)=a^Tx+b^Ty$ and $g(x,y)=Cx+Dy-q$ with $a\in \mathbb{R}^n$, $b\in \mathbb{R}^m$, $C\in  \mathbb{R}^{r\times n}, D\in  \mathbb{R}^{r\times m}, q\in \mathbb{R}^r$. It is easy to see that  $(BP)_1$ can be equivalently written as the following problem
\begin{eqnarray*}
({\rm VP})_1~~~~~~\min && F(x,y)\nonumber\\
{\rm s.t.} &&a^Tx+b^Ty-V(x)\leq 0,\\
&&Cx+Dy-q\leq 0,
\end{eqnarray*}
where 
$V(x):=\displaystyle \inf_{y'} \{a^Tx+b^Ty'\mv  Cx+Dy'-q\leq 0\} $ is the value function of the lower level problem. Then by convex analysis, the value function V(x) is a polyhedral convex function and we have an explicit expression for its subgradient.
\begin{proposition}(see e.g., \cite[Proposition 4.1]{YeSYWu}) Let $\yb\in S(\xb)$ and suppose that 
$f(x,y)=a^Tx+b^Ty$ and $g(x,y)=Cx+Dy-q$. Then $V(x)$ is convex with  $ \partial V(\xb) \not =\emptyset$ and
$$ \partial V(\xb)=\left \{ a+C^T\nu| 0=b+D^T\nu ,  0\leq \nu\perp -(C \xb+D\yb-q) \geq 0 \right \}.$$
\end{proposition}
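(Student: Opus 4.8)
The plan is to compute $\partial V(\xb)$ by exploiting the fact that, for a linear program parametrized linearly in $x$, the value function is the support-type function associated with the dual feasible set, and its subdifferential at a point is governed by the set of optimal dual solutions. First I would set up the linear program $(\mathrm{P}_x)$ in primal form and write down its dual: since $V(x)=\inf_{y'}\{a^Tx+b^Ty' \mid Cx+Dy'-q\le 0\}$, the term $a^Tx$ is a constant in the minimization and the dual of $\min\{b^Ty' \mid Dy'\le q-Cx,\ y'\ \text{free}\}$ is $\max\{(Cx-q)^T\nu \mid D^T\nu=-b,\ \nu\ge 0\}$ (signs to be reconciled with the paper's convention that multipliers enter as $0=b+D^T\nu$). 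Thus $V(x)=a^Tx+\max\{(Cx-q)^T\nu \mid b+D^T\nu=0,\ \nu\ge 0\}$, exhibiting $V$ as a constant-plus-linear term plus the supremum of a family of affine functions of $x$ indexed by the (fixed, $x$-independent) polyhedron $N:=\{\nu\ge 0 \mid b+D^T\nu=0\}$. Feasibility of the primal (guaranteed since $S(\xb)\neq\emptyset$, which we assume for all $x$) together with the Slater-type or polyhedral structure gives boundedness of the optimum and $\partial V(\xb)\neq\emptyset$; $V$ is polyhedral convex because it is the pointwise maximum over the finitely many vertices of $N$ of affine functions, plus a linear term (one should note $N$ may be unbounded, but since the sup is finite the recession directions of $N$ do not contribute, so effectively only vertices matter).

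Next I would apply the subdifferential formula for a pointwise supremum of affine functions, or equivalently the classical sensitivity result for linear programming: the subdifferential of $x\mapsto \max_{\nu\in N}(Cx-q)^T\nu$ at $\xb$ is the convex hull of $\{C^T\nu \mid \nu\in N,\ (C\xb-q)^T\nu = \text{opt value}\}$, i.e.\ $\{C^T\nu \mid \nu \in \text{the optimal dual set at }\xb\}$, and since the optimal dual set is itself a (closed, convex) face of $N$ hence convex, the convex hull is unnecessary. Adding back the gradient $a$ of the linear term gives $\partial V(\xb)=\{a+C^T\nu \mid \nu\in N,\ \nu\ \text{optimal for the dual at }\xb\}$. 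The remaining task is to identify the optimal dual solutions with the $\nu$ appearing in the claimed formula: by LP strong duality and complementary slackness, $\nu\ge 0$ with $b+D^T\nu=0$ is dual-optimal at $\xb$ exactly when there is a primal optimal $\yb\in S(\xb)$ with $\nu\perp (q-C\xb-D\yb)$, i.e.\ $0\le \nu \perp -(C\xb+D\yb-q)\ge 0$ — which is precisely the description in the statement (the condition $-(C\xb+D\yb-q)\ge 0$ is just primal feasibility of $\yb$, and $\yb\in S(\xb)$ is fixed in the statement). Matching the sign conventions carefully (the paper writes the stationarity of the lower-level Lagrangian as $0=\nabla_y f+\nabla_y g^T\lambda = b+D^T\lambda$) closes the identification.

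The main obstacle I expect is the bookkeeping around unbounded dual feasible sets and making the supremum-subdifferential formula rigorous without invoking compactness: one must argue that although $N$ need not be bounded, the finiteness of $V(\xb)$ (equivalently primal feasibility of $(\mathrm{P}_{\xb})$, which holds) forces the optimal face of $N$ to be a polytope or at least forces the subdifferential to be generated by finitely many extreme points of $N$, so that the Valadier/Ioffe--Tikhomirov formula for the subdifferential of a supremum applies cleanly and yields exactly the optimal-dual face with no recession contributions. An alternative route that sidesteps this is to cite directly the known polyhedral sensitivity result for linear programs (e.g.\ from Rockafellar's convex analysis or from the referenced \cite{YeSYWu}) that $\partial V(\xb)$ equals the set of shadow prices, which is $\{a+C^T\nu\}$ over the optimal dual set, and then only the complementary-slackness reformulation remains — a short, routine verification. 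I would present the proof via this second route for brevity, spelling out the strong-duality/complementary-slackness equivalence as the one genuinely new computation.
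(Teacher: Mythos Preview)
The paper does not actually supply a proof of this proposition: it is stated with the citation ``see e.g., \cite[Proposition 4.1]{YeSYWu}'' and only the one-line remark that ``by convex analysis, the value function $V(x)$ is a polyhedral convex function and we have an explicit expression for its subgradient.'' There is therefore no in-paper argument to compare against.

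Your proposed argument is correct and is the standard one. A few minor remarks. First, strong duality and the existence of a dual optimal solution (hence $\partial V(\xb)\neq\emptyset$) follow directly from polyhedrality once the primal is feasible with finite optimal value, so you do not need a Slater-type assumption. Second, the step you flag as potentially delicate---identifying the subdifferential of $x\mapsto \sup_{\nu\in N}(Cx-q)^T\nu$ with $C^T$ times the argmax face even when $N$ is unbounded---is immediate here because this function is the support function of the polyhedron $C^T N$ shifted by a linear term, and the subdifferential of a support function at a point where it is finite is exactly the exposed face (no compactness or Valadier machinery needed). Third, your complementary-slackness identification is exactly right: for a fixed primal optimal $\yb$, a dual feasible $\nu$ is dual optimal if and only if $\nu\perp -(C\xb+D\yb-q)$, which gives the two inclusions in one stroke and shows the formula is independent of the choice of $\yb\in S(\xb)$.
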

Since the function $a^Tx+b^Ty-V(x)$ is a concave function, it was shown in \cite{Ye04} that the nonsmooth weak reverse  constraint qualification holds for problem $({\rm VP})_1$ and hence by using the nonsmooth muliplier rule and the expression for the subgradient of the value function the following optimality condition holds.
\begin{theorem}(Ye \cite[Corollary 4.1]{Ye04}) Let $(\xb,\yb)$ be a local optimal solution of $(VP)_1$. Then there exists $\delta \geq 0, \bar \nu \in \mathbb{R}^r$ and 
$\alpha\in \mathbb{R}^r$ such that
  \begin{eqnarray*}
  &&0=\nabla_x F(\xb,\yb)+C^T (\alpha-\delta \bar \nu),\nonumber \\
  &&0=\nabla_y F(\xb,\yb)+D^T(\alpha-\delta \bar \nu), \nonumber\\
  && 0\leq \alpha \perp -(C \xb+D\yb-q) \geq 0,\\
  && 0\leq \bar \nu\perp -(C \xb+D\yb-q) \geq 0.
  \end{eqnarray*}
\end{theorem}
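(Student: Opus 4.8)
The plan is to derive the stated optimality condition for $(\mathrm{VP})_1$ by combining the nonsmooth multiplier rule with the subgradient formula for the value function $V$ from the preceding proposition. First I would note that $(\mathrm{VP})_1$ fits the abstract framework: it minimizes the smooth function $F$ subject to one nonsmooth inequality constraint $\varphi(x,y) := a^T x + b^T y - V(x) \le 0$ and the smooth affine inequalities $Cx + Dy - q \le 0$. Since $V$ is polyhedral convex (by the proposition), $\varphi$ is a difference of a polyhedral convex function and a convex function, hence Clarke regular in a generalized sense and in particular the constraint system is one to which the nonsmooth weak reverse convex constraint qualification of \cite{Ye04} applies; the key structural fact is that $\varphi$ is concave, being the negative of the polyhedral convex function $V(x) - a^T x - b^T y$ up to affine terms. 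I would invoke \cite[Corollary 4.1]{Ye04} directly, or recall its proof: the weak reverse convex CQ guarantees the nonsmooth Lagrange multiplier rule holds at the local minimizer $(\xb, \yb)$.

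Applying the multiplier rule then yields multipliers $\delta \ge 0$ associated with the constraint $\varphi \le 0$ and $\alpha \in \R^r$ associated with $Cx + Dy - q \le 0$, with $\delta \ge 0$, $0 \le \alpha \perp -(C\xb + D\yb - q) \ge 0$, and
$$0 \in \nabla F(\xb,\yb) + \delta\, \partial^c \varphi(\xb,\yb) + \binom{C^T}{D^T}\alpha.$$
The next step is to compute $\partial^c \varphi(\xb,\yb)$. Writing $\varphi(x,y) = a^T x + b^T y - V(x)$, we have $\partial \varphi(\xb,\yb) = \{(a - \xi, b) : \xi \in \partial V(\xb)\}$ (the subtraction of a convex function in the $x$-variable only, with an affine part), and since everything is polyhedral the Clarke subdifferential coincides with this set. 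Substituting the explicit formula $\partial V(\xb) = \{a + C^T\nu : 0 = b + D^T\nu,\ 0 \le \nu \perp -(C\xb + D\yb - q) \ge 0\}$ from the proposition, one obtains that the stationarity inclusion becomes: there exists $\bar\nu$ feasible for that complementarity system with $0 = b + D^T\bar\nu$, such that
$$0 = \nabla_x F(\xb,\yb) + C^T\alpha - \delta\, C^T\bar\nu, \qquad 0 = \nabla_y F(\xb,\yb) + D^T\alpha - \delta\, D^T\bar\nu,$$
where in the $y$-equation the term $\delta b$ coming from $\partial_y \varphi = b$ is rewritten using $b = -D^T\bar\nu$, collapsing it into the $-\delta D^T\bar\nu$ term. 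This is exactly the asserted system with the grouping $\alpha - \delta\bar\nu$, together with the two complementarity conditions on $\alpha$ and on $\bar\nu$.

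The main obstacle I anticipate is the careful verification that the nonsmooth weak reverse convex constraint qualification indeed holds for $(\mathrm{VP})_1$ — this is where the concavity of $\varphi$ and the polyhedrality of $V$ (guaranteed by linearity of the lower level data) are essential, and it is the content of \cite{Ye04} that must be cited rather than reproved. A secondary bookkeeping point is handling the $\partial_y$-component of the subdifferential of $\varphi$ correctly: $V$ depends only on $x$, so $\varphi$ is affine in $y$ with gradient $b$, and one must use the lower-level stationarity relation $0 = b + D^T\bar\nu$ to merge the $\delta b$ term into the $D^T$ block so that the same combination $\alpha - \delta\bar\nu$ appears in both equations. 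Since the paper presents this as a direct corollary of \cite{Ye04}, I would keep the argument short: cite the CQ result, write down the multiplier rule, plug in the proposition's subgradient formula, and simplify.
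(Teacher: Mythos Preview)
Your proposal is correct and follows precisely the approach the paper outlines: concavity of $a^Tx+b^Ty-V(x)$ gives the nonsmooth weak reverse convex constraint qualification from \cite{Ye04}, the nonsmooth multiplier rule then applies, and substituting the subgradient formula for $V$ from the preceding proposition (together with the relation $b=-D^T\bar\nu$ to merge the $y$-component) yields the stated system. The paper itself does not give a detailed proof but only this same sketch before citing \cite[Corollary 4.1]{Ye04}, so your write-up is fully aligned with it.
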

Let $\xi=\alpha-\delta \bar \nu$ where $\alpha, \delta, \bar \nu$ are those found in Theorem 2  and $w=0$ in Definition 9. It is easy to verify that $(\xb,\yb,\bar \nu)$ is an S-stationary point of the corresponding (MPCC). Unlike using (MPCC) reformulation by which usually a constraint qualification such as the MPCC LICQ is needed to ensure that a local optimal solution is an S-stationary point (see e.g. \cite{Ye}), $\bar \nu$ is a  multiplier for the lower level problem $(P_x)$ selected automatically from the subdifferential of the value function. For this particular multiplier 
$\bar \nu$,  the S-stationary condition holds under {\it no} constraint qualification. 

For bilevel programs where the lower level problem is not completely linear but  is convex with a convex  value function, the reader is referred to \cite{Ye04,Ye06} for more detailed discussions and results.

\subsection{The case where the lower level  constraint is independent of the upper level}
As we see in the previous discussion, the difficulty of using the first order approach occurs when the lower level has non-unique multipliers. In this subsection we consider a special case where the lower level constraint $\Gamma(x)=\Gamma$  is independent of $x$. Then $y\in S(x)$ if and only if  the generalized equation $0\in \nabla_y f(x,y)+N_\Gamma (y)$ holds.  So we next consider 
  the mathematical program with equilibrium constraints (MPEC) which is an equivalent reformulation of $(BP)_1$ when $g$ is independent of $x$:
\begin{eqnarray*}
({\rm MPEC})~~~~~~\min_{x,y} && F(x,y)\nonumber\\
{\rm s.t.} && 0\in \nabla_yf(x,y)+N_\Gamma(y),
\end{eqnarray*}
where $\Gamma:=\{y: g(y)\leq 0\}$ and $g:\mathbb{R}^m\rightarrow \mathbb{R}^r$ is either affine or convex with a Slater point.

For  problem (MPEC), Ye and Ye \cite{YeYe} showed that the pseudo upper-Lipschitz continuity/calmness/MSCQ   guarantees M-stationarity of solutions. 
\begin{theorem}(Ye and Ye \cite[Theorem 3.2]{YeYe})
Let $(\xb,\yb)$ be a local optimal solution of (MPEC). Suppose that the perturbed feasible solution map
$$\Sigma(\alpha):=\{(x,y)\mv \alpha \in  \nabla_yf(x,y)+N_\Gamma(y)\} $$
is  calm/pseudo upper-Lipschitz continuous at $(0, \xb,\yb)$ (i.e.,   MSCQ  holds at $(\bar x,\bar y)$).  Then $(\xb,\yb)$ is an M-stationary point of problem (MPEC), i.e.,
 there exist $w\in \R^m$ such that
  \begin{eqnarray*}
  &&0=\nabla_x F(\xb,\yb)+\nabla_{yx}^2 f(\xb,\yb)w,\\
  &&0=\nabla_y F(\xb,\yb)+\nabla_{yy}^2 f(\xb,\yb)w+D^* N_{\Gamma} (\yb, -\nabla_yf(\xb,\yb))(w).
  \end{eqnarray*}
\end{theorem}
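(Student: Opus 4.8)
The plan is to cast problem (MPEC) into the abstract form \eqref{op} and invoke the earlier Theorem (Ye and Ye \cite[Theorem 3.1]{YeYe}) together with a coderivative chain rule. Concretely, set $z:=(x,y)\in\R^n\times\R^m$, $f(z):=F(x,y)$, and define the set-valued map $\Phi(z):=\nabla_yf(x,y)+N_\Gamma(y)$, so that the feasible region of (MPEC) is exactly $\{z\mid 0\in\Phi(z)\}$ and the perturbed feasible solution map $\Sigma$ in the statement coincides with $\Phi^{-1}$ as in \eqref{feasible}. Since $f$ is $C^1$ and $g$ is affine or convex with a Slater point, $\Gamma$ is closed convex, $N_\Gamma$ has closed graph, and $\Phi$ has closed graph as well. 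By hypothesis $\Sigma$ is calm at $(0,\xb,\yb)$, i.e.\ MSCQ holds at $(\xb,\yb)$, so Theorem (Ye and Ye \cite[Theorem 3.1]{YeYe}) yields $\eta\in\R^m$ with
\begin{equation*}
0\in\partial f(\xb,\yb)+D^*\Phi\big((\xb,\yb),0\big)(\eta).
\end{equation*}
Because $F$ is continuously differentiable, $\partial f(\xb,\yb)=\{\nabla F(\xb,\yb)\}=\{(\nabla_xF(\xb,\yb),\nabla_yF(\xb,\yb))\}$, so it remains only to compute $D^*\Phi((\xb,\yb),0)$.

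Next I would compute the coderivative of $\Phi$ by the sum rule. Write $\Phi=\Psi+\Theta$ where $\Psi(x,y):=\nabla_yf(x,y)$ is a single-valued $C^1$ map (since $f$ is $C^2$ in $y$, actually $f$ is assumed $C^2$ in $y$, so $\nabla_yf$ is $C^1$ in $y$ and we need enough smoothness in $x$ — here I would note $\nabla_{yx}^2f$ appears, so $\nabla_yf$ is treated as continuously differentiable jointly) and $\Theta(x,y):=N_\Gamma(y)$. For a single-valued continuously differentiable summand the coderivative sum rule holds as an equality: $D^*\Phi((\xb,\yb),0)(\eta)=\nabla\Psi(\xb,\yb)^T\eta+D^*\Theta\big((\xb,\yb),-\nabla_yf(\xb,\yb)\big)(\eta)$, using that $0-\nabla_yf(\xb,\yb)$ is the relevant reference point in the graph of $\Theta$. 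Here $\nabla\Psi(\xb,\yb)^T\eta=\big(\nabla_{yx}^2f(\xb,\yb)\,\eta,\ \nabla_{yy}^2f(\xb,\yb)\,\eta\big)$ after arranging the block transpose correctly. Since $\Theta$ does not depend on the $x$-component, $D^*\Theta((\xb,\yb),-\nabla_yf(\xb,\yb))(\eta)=\{0\}\times D^*N_\Gamma(\yb,-\nabla_yf(\xb,\yb))(\eta)$. Substituting and renaming $\eta$ as $w$, the inclusion $0\in\partial f+D^*\Phi(\cdot)(w)$ splits into the two displayed equations of the theorem — the $x$-block giving $0=\nabla_xF(\xb,\yb)+\nabla_{yx}^2f(\xb,\yb)w$ and the $y$-block giving $0=\nabla_yF(\xb,\yb)+\nabla_{yy}^2f(\xb,\yb)w+D^*N_\Gamma(\yb,-\nabla_yf(\xb,\yb))(w)$.

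The main obstacle is justifying the coderivative sum rule with the single-valued smooth summand as an \emph{equality} (not merely an inclusion) without an extra qualification condition; this is a standard fact — adding a $C^1$ single-valued map to a set-valued map amounts to an invertible change of variables on the graph, under which both $\widehat N$ and $N$ transform exactly — so it should be invoked with a reference to \cite{RoWe98} or \cite{Mor} rather than reproved. A secondary technical point is the exact block structure of the transposed Jacobian of $(x,y)\mapsto\nabla_yf(x,y)$ and checking that the sign and placement of $\nabla_{yx}^2f$, $\nabla_{yy}^2f$ match the statement; this is routine bookkeeping. One should also remark that the feasibility $\yb\in S(\xb)$ (equivalently $0\in\nabla_yf(\xb,\yb)+N_\Gamma(\yb)$, using convexity of $(\mathrm P_x)$ and the Slater condition so KKT is necessary and sufficient) guarantees that $-\nabla_yf(\xb,\yb)\in N_\Gamma(\yb)$, so the coderivative $D^*N_\Gamma(\yb,-\nabla_yf(\xb,\yb))$ is evaluated at a genuine point of $\mathrm{gph}\,N_\Gamma$, which is needed for everything above to make sense.
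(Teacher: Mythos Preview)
The paper does not supply its own proof of this theorem; it is simply quoted from \cite{YeYe}. Your argument---casting (MPEC) in the abstract form \eqref{op}, invoking the general M-stationarity result (Theorem~1, i.e.\ \cite[Theorem~3.1]{YeYe}), and then computing $D^*\Phi$ via the exact coderivative sum rule for a $C^1$ single-valued summand---is correct and is precisely the intended specialization, so there is nothing further to compare.
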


In the case where  $\nabla_yf(x,y)$ is  affine  and $\Gamma$ is a convex polyhedral set, the set-valued map $ \Sigma(\cdot)$ is a polyhedral multifunction which means that its graph is the union of finitely many polyhedral convex sets. According to Robinson \cite{Robinson81}, $\Sigma (\cdot)$ is upper Lipschitz continuous  which implies that  MSCQ  holds automatically at each feasible solution. How to check MSCQ for the general case? 
We now describe a sufficient condition for MSCQ derived by Gfrerer and Ye in \cite{GfrYe16a}. First we start with some notation.
Let $$\bar\Lambda:=\{\lambda\mv  0=\nabla_y f(\bar x,\bar y) +\nabla g(\bar y)^T\lambda, 0\leq -g(\yb)\perp \lambda \geq 0\}$$
be the multiplier set for the lower level problem $(P_{\xb})$ at $\bar{y}$. Define the critical cone for $\Gamma$ at $\yb$  as 
$$\KbG:=\{v\mv \nabla g(\yb)v\in T_{\R^q_-}(g(\yb)), \nabla_y f(\xb,\yb)^Tv=0\}.$$ 
For every $v\in \KbG$, define the directional multiplier set at direction $v$ as 
$$\Lbv:=\argmax \left\{v^T\nabla^2(\lambda^Tg)(\yb)v\mv \lambda\in \Lb \right \}=\{\lambda| v^T\nabla^2g(\yb) v\in N_{\Lb}(\lambda)\}.$$
Let $\bar{I}:=\{i\mv g_i(\yb)=0\}$ be the index of constraints active at $\yb$. 
For every $\lambda\in \bar{\Lambda}$, we define the index set of strongly active constraints
$$\bar J^+(\lambda):= \{i| g_i(\yb)=0,\lambda_i>0\}.$$ 
Under our assumption of this section, the multiplier set $\bar\Lambda$ is nonempty and hence the critical cone for $\Gamma$ at $\yb$  can be represented as
$$\KbG=\Big\{v\mv \nabla g_i(\yb)^T v  \left \{
\begin{array}{ll} =0 & i\in \bar J^+(\Lb)\\
\leq 0 & i\in\bar I\setminus \bar J^+(\Lb)
\end{array} \right. \Big \}
\Big\} ,
$$ 
where $\displaystyle \bar J^+(\Lb):=\cup_{\lambda \in \Lb} \bar J^+(\lambda)$.
For every  $\bar v\in \KbG$, we denote the index set of active constraints for the critical cone at $\bar v$ as $$ \quad \bar{I}(\vb):=\{i\in \bar{I}|  \nabla g_i(\yb)^T \vb=0\}.$$
 Denote by $\bar {\cal E}$ the collection of all the extreme points of the closed and convex set of multipliers $\bar \Lambda$ and recall that $\lambda \in \bar \Lambda$ belongs to $\bar {\cal E}$ if and only the family of gradients $ \{ \nabla g_i(\bar y)| i\in \bar J^+(\lambda)\}$ is linearly independent.
Specializing the result from \cite {GfrYe16a} we have the following checkable constraint qualification for problem (MPEC). 
\begin{theorem}[{ \cite[Theorems 4 and  5]{GfrYe16a}}]
  Let $(\xb,\yb)$ be  a feasible solution of  problem (MPEC). Assume that
  there do not exist $(u,v)\not=0$, $\lambda\in \bar \Lambda(v)\cap \bar{\cal E}$ and $w\not=0$ satisfying
  \begin{eqnarray*}
    &&-\nabla_{yx}^2f(\xb,\yb)u-\nabla_{yy}^2f(\xb,\yb)v-\nabla^2 (\lambda^Tg)(\yb)v \in N_{\bar{K}_\Gamma}(v),\\
    \label{EqSuffMS3}&&\nabla_{xy}^2f(\xb,\yb)w=0,
    \\
    \label{EqSuffMS4}&&\nabla g_i(\yb)^Tw=0, i\in \bar{J} ^+(\lambda),\; w^T\left (\nabla_{yy}^2 f(\xb,\yb)+\nabla^2(\lambda^Tg)(\yb)\right )w
    \leq 0.\qquad
  \end{eqnarray*}
Then MSCQ for problem (MPEC) holds at $(\xb,\yb)$.
\end{theorem}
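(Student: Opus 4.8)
The plan is to obtain MSCQ from a second-order (directional) sufficient condition for metric subregularity, applied to the constraint of (MPEC) written in generalized-equation form. Put $G(x,y):=(y,-\nabla_yf(x,y))$ and $\mathcal C:=\Gr N_\Gamma$, so that the feasible set of (MPEC) is $\{(x,y)\mv G(x,y)\in\mathcal C\}$ and MSCQ at $(\xb,\yb)$ is exactly the metric subregularity of the map $(x,y)\rightrightarrows G(x,y)-\mathcal C$ at $((\xb,\yb),0)$. I would then invoke the version of Gfrerer's second-order sufficient condition for metric subregularity used in \cite{GfrYe16a}: for a $C^2$ map $G$ and a closed set $\mathcal C$, metric subregularity holds at $\zb:=(\xb,\yb)$ provided that for every nonzero critical direction $d$ (i.e.\ $\nabla G(\zb)d\in T_{\mathcal C}(G(\zb))$) there is no nonzero $\mu$ in the \emph{directional} limiting normal cone $N_{\mathcal C}(G(\zb);\nabla G(\zb)d)$ with $\nabla G(\zb)^T\mu=0$ and $\langle\mu,\nabla^2G(\zb)(d,d)\rangle\le0$. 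The proof then reduces to making each of the three ingredients — critical directions, the directional normal cone to $\Gr N_\Gamma$, and the second-order term — explicit in terms of the problem data, and verifying that their conjunction is precisely the system displayed in the theorem.

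First I would compute the critical directions. Writing $d=(u,v)$ one has $\nabla G(\zb)d=\big(v,\,-\nabla_{yx}^2f(\xb,\yb)u-\nabla_{yy}^2f(\xb,\yb)v\big)$ and $T_{\mathcal C}(G(\zb))=T_{\Gr N_\Gamma}(\yb,-\nabla_yf(\xb,\yb))$. Using the Slater condition and the standard representation $N_\Gamma(y)=\{\nabla g(y)^T\lambda\mv\lambda\ge0,\ \lambda^Tg(y)=0\}$, the requirement $\nabla G(\zb)d\in T_{\mathcal C}(G(\zb))$ forces in particular $v\in\KbG$ (the projection of $T_{\Gr N_\Gamma}(\yb,-\nabla_yf(\xb,\yb))$ onto its first factor), which is why the objects introduced before the theorem — the critical cone $\KbG$, the directional multipliers $\Lbv$, the strongly-active index sets $\bar J^+(\lambda)$, and the extreme multipliers $\bar{\cal E}$ — are the relevant ones.

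The technical heart is then the computation of the directional limiting normal cone to $\Gr N_\Gamma$ at $(\yb,-\nabla_yf(\xb,\yb))$ along the direction whose first component is $v$. Since $\Gamma$ is cut out by smooth convex (or affine) inequalities satisfying Slater, there is a known formula for $N_{\Gr N_\Gamma}$ in terms of the active index sets and linearly independent subfamilies of active gradients; its directional refinement shows that $\mu=(\mu_1,w)$ can belong to that cone only if, for some $\lambda\in\Lbv$, one has $\nabla g_i(\yb)^Tw=0$ for $i\in\bar J^+(\lambda)$ together with $\mu_1\in-\nabla^2(\lambda^Tg)(\yb)w+N_{\KbG}(v)$, and — crucially — with $\lambda$ extreme, i.e.\ $\lambda\in\bar{\cal E}$, since the normal-cone formula is assembled from subfamilies of active gradients that are linearly independent (a Carath\'{e}odory-type reduction). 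Imposing the remaining two requirements of the abstract criterion then finishes the job: $\nabla G(\zb)^T\mu=0$ unpacks into $-\nabla_{yx}^2f(\xb,\yb)u-\nabla_{yy}^2f(\xb,\yb)v-\nabla^2(\lambda^Tg)(\yb)v\in N_{\KbG}(v)$ and $\nabla_{xy}^2f(\xb,\yb)w=0$, while $\langle\mu,\nabla^2G(\zb)(d,d)\rangle\le0$ becomes $w^T\big(\nabla_{yy}^2f(\xb,\yb)+\nabla^2(\lambda^Tg)(\yb)\big)w\le0$. Hence the nonexistence of a nonzero $(u,v)$, an extreme directional multiplier $\lambda\in\Lbv\cap\bar{\cal E}$, and a nonzero $w$ solving this system is exactly the stated hypothesis; the degenerate case $\mu\ne0$ with $w=0$ is already ruled out by the first-order part of the criterion, so requiring only $w\ne0$ loses nothing.

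The step I expect to be the main obstacle is precisely this directional normal cone computation together with the reduction to extreme-point multipliers: it calls for a careful case analysis over the active-index patterns of sequences $(y_k,y_k^*)\to(\yb,-\nabla_yf(\xb,\yb))$ in $\Gr N_\Gamma$ that approach along the critical direction, and an argument that the regular normals to $\Gr N_\Gamma$ along such sequences are controlled by linearly independent subfamilies of $\{\nabla g_i(\yb)\}_{i\in\bar I}$. Rather than rederive this, I would appeal to the corresponding lemmas of \cite{GfrYe16a} and the underlying coderivative calculus for $N_\Gamma$; Steps 1 and 3 are then routine specialization of those results to the present $G$.
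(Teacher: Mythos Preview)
The paper does not supply its own proof of this theorem: it is simply quoted as a specialization of \cite[Theorems~4 and~5]{GfrYe16a}. Your overall plan --- write the constraint as $G(x,y):=(y,-\nabla_yf(x,y))\in\mathcal C:=\Gr N_\Gamma$, invoke a directional sufficient condition for metric subregularity, and feed in an explicit description of the directional limiting normal cone to $\Gr N_\Gamma$ --- is indeed the route taken in \cite{GfrYe16a}, and your diagnosis that the hard step is the normal-cone formula together with the reduction to extreme multipliers $\lambda\in\bar{\cal E}$ is correct.

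There is, however, a concrete error in how you account for the quadratic condition
\[
w^T\bigl(\nabla_{yy}^2 f(\xb,\yb)+\nabla^2(\lambda^Tg)(\yb)\bigr)w\le 0.
\]
You claim it is the term $\langle\mu,\nabla^2G(\zb)(d,d)\rangle\le0$ of an abstract second-order criterion. That cannot be: with $G(x,y)=(y,-\nabla_yf(x,y))$, the first block of $G$ is linear and the second block is $-\nabla_yf$, so $\nabla^2G(\zb)(d,d)$ consists of zero in the first block and \emph{third} derivatives of $f$ in the second; in particular it carries no $g$-dependence whatsoever. More decisively, $\langle\mu,\nabla^2G(\zb)(d,d)\rangle$ is \emph{linear} in $\mu$ (hence linear in $w$) and \emph{quadratic} in the direction $d=(u,v)$, whereas the condition in the theorem is quadratic in $w$ and does not involve $(u,v)$ at all. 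These two expressions simply cannot coincide.

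What actually happens in \cite{GfrYe16a} is that the abstract tool is the \emph{first-order} sufficient condition for metric subregularity (FOSCMS): for each nonzero critical $d$, no nonzero $\mu$ lies in $N_{\mathcal C}(G(\zb);\nabla G(\zb)d)$ with $\nabla G(\zb)^T\mu=0$. All the second-order content of the theorem lives inside the formula for $N_{\Gr N_\Gamma}\bigl((\yb,\yb^*);(v,v^*)\bigr)$, because $\Gr N_\Gamma$ is already a ``second-order'' object (it is parametrized through $\nabla g(y)^T\lambda$, so normals to its graph involve $\nabla^2(\lambda^Tg)$). The description of that directional normal cone produces, for $(w^*,-w)$ in it, the index condition $\nabla g_i(\yb)^Tw=0$ for $i\in\bar J^+(\lambda)$ together with a sign condition that is quadratic in $w$; combining this with the relation $w^*=-\nabla_{yy}^2f(\xb,\yb)\,w$ forced by $\nabla G(\zb)^T\mu=0$ is what assembles the Lagrangian Hessian $\nabla_{yy}^2 f+\nabla^2(\lambda^Tg)$. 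So your Step~4 is not merely the ``main obstacle'' to be outsourced --- it is precisely where the inequality you misattributed is manufactured, and Step~3 (the $\nabla^2G$ term) plays no role.
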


We would like to comment that recently \cite {Ad-Hen-Out} has compared the calmness condition for the two problems (MPEC) and (MPCC). They have shown that in general the calmness condition for (MPEC) is weaker than the one for the corresponding (MPCC).

Now we consider the M-stationary condition in Theorem 3. The expression of the M-stationary condition involves the coderivative of the normal cone mapping $N_{\Gamma} (\cdot)$. Precise formulae for this coderivative in terms of the problem data can be found in  \cite[Proposition 3.2]{HenrionW} if  $\Gamma$ is polyhderal, in  \cite[Theorem 3.1]{HenrionOS} if  LICQ  holds at $\yb$ for the lower level problem, in  \cite[Theorem 3]{GfrOut16a} under a relaxed MFCQ combined with the  so-called  $2-$regularity.

Recently Gfrerer and Ye \cite{GfrYe19} have derived a necessary optimality condition that is sharper than the M-stationary condition under   the following 2-nondegeneracy condition.
\begin{definition}\label{two-nond} Let  $v\in \KbG$. We say that $g$ is {\em 2-nondegenerate in direction $v$ at $\yb$} if
\[\nabla^2(\mu^Tg)(\yb)v \in N_\KbG(v)-N_\KbG(v),\ \mu\in \Span(\Lb(v)-\Lb(v))  \ \Longrightarrow\ \mu=0.\]
\end{definition}

\noindent In the case where the {directional} multiplier set $\Lb(v)$ is a singleton, $\Span(\Lb(v)-\Lb(v))=\{0\}$ and hence $g$ is  2-nondegenerate in {this} direction $v$.
\begin{theorem}(\cite[Theorem 6]{GfrYe19}) 
Assume that $(\xb,\yb)$ is a local minimizer for  problem (MPEC) fulfilling MSCQ at $(\xb,\yb)$. Further assume that $g$ is 2-nondegenerate in every nonzero critical  direction $0\not=v\in\KbG$.
Then there are a critical direction $\vb\in\KbG$, a directional multiplier $\lb\in\Lb(\vb)$, index sets $\J^+$, $\J$, ${\cal I}^+$, and ${\cal I}$ with $\bar J^+(\lb)\subseteq \J^+\subseteq\J\subseteq \bar J^+(\Lb(\vb))\subseteq \bar J^+(\Lb)\subseteq {\cal I}^+\subseteq {\cal I}\subseteq \bar I(\vb)$ and elements $w\in\R^m$, $\eta,\xi\in\R^q$ such that
  \begin{eqnarray*}
 &&0=\nabla_x F(\xb,\yb)-\nabla_{xy}^2 f(\xb,\yb)w,\\
 &&0=\nabla_y F(\xb,\yb)-\nabla_{yy}^2f(\xb,\yb)w -\nabla^2(\lb^Tg)(\yb)w+\nabla g(\yb)^T\xi+2\nabla ^2(\eta^Tg)(\yb)\vb,\\
 && \xi_i=0 \mbox{ if } i\not\in {\cal I},\\
&& \xi_i\geq 0, \nabla g_i(\yb)^T w\leq 0 \mbox{ if } i\in {\cal I}\setminus{\cal I}^+,\\
 && \nabla g_i(\yb)^T w=0 \mbox{ if } i\in {\cal I}^+,\\
  && \nabla g(\yb)^T\eta=0,\ \eta_i=0,i\not\in \J,\ \eta_i\geq 0, i\in \J\setminus \J^+.
  \end{eqnarray*}\end{theorem}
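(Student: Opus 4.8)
The plan is to reduce the statement to the sharp necessary optimality condition obtained in \cite{GfrYe19} for an abstract optimization problem whose feasible set is described by a generalized equation, and then to unravel the coderivative of the normal cone mapping $N_\Gamma$ along a critical direction into the explicit multiplier data displayed in the theorem. I would proceed in four steps. First, I would rewrite (MPEC) in the form \eqref{op}, i.e.\ $\min_{(x,y)} F(x,y)$ s.t.\ $0\in\Phi(x,y)$ with $\Phi(x,y):=\nabla_yf(x,y)+N_\Gamma(y)$, noting that $\Gr\Phi$ is closed and that the constraint map is exactly $\Sigma(\alpha)=\Phi^{-1}(\alpha)$ from \eqref{feasible}; MSCQ at $(\xb,\yb)$ is precisely the calmness of $\Sigma$ at $(0,\xb,\yb)$.

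Second, since MSCQ (metric subregularity of $\Phi$) holds, I would invoke the directional/second-order machinery of \cite{GfrYe19}: under metric subregularity one obtains a necessary condition sharper than the plain M-stationarity of Theorem~3, phrased along a nonzero critical direction $(\ub,\vb)$ of the constraint system together with an associated second-order term, provided a nondegeneracy hypothesis rules out the degenerate directions. Here the hypothesis that $g$ is 2-nondegenerate in every nonzero $v\in\KbG$ (Definition~\ref{two-nond}) is exactly what is needed so that the argument is not obstructed in any critical direction $v$; I would feed this in to guarantee existence of a usable direction $\vb\in\KbG$ and a directional multiplier $\lb\in\Lb(\vb)$.

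Third — and this is the technical heart — I would compute the limiting coderivative $D^*N_\Gamma\big(\yb,-\nabla_yf(\xb,\yb)\big)$ in the critical direction $\vb$ in terms of the problem data. Using the reduction of $N_\Gamma$ to the normal cone of $\R^q_-$ composed with the smooth map $g$ (via the Slater condition, which gives MFCQ for the lower level and hence boundedness/nonemptiness of $\Lb$), the directional coderivative splits into a ``first-order'' part carried by the gradients $\nabla g_i(\yb)$ sorted according to the sign of the directional multiplier and of $\nabla g_i(\yb)^Tw$ (producing the index sets $\J^+\subseteq\J\subseteq \bar J^+(\Lb(\vb))$ and ${\cal I}^+\subseteq{\cal I}\subseteq\bar I(\vb)$ and the sign conditions on $\xi$) and a ``second-order'' part producing the term $2\nabla^2(\eta^Tg)(\yb)\vb$ with $\eta$ satisfying $\nabla g(\yb)^T\eta=0$, $\eta_i=0$ off $\J$, $\eta_i\ge0$ on $\J\setminus\J^+$. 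The nesting $\bar J^+(\lb)\subseteq\J^+\subseteq\J\subseteq\bar J^+(\Lb(\vb))\subseteq\bar J^+(\Lb)\subseteq{\cal I}^+\subseteq{\cal I}\subseteq\bar I(\vb)$ comes from tracking active/strongly-active indices along the sequences defining the limiting normal cone. Finally, I would substitute this directional coderivative expansion into the stationarity inclusion $0\in\partial F(\xb,\yb)+D^*\Phi(\xb,\yb,0)(w)$; since $F$ is $C^1$ and the $\nabla_yf$ term contributes the Hessian blocks $\nabla_{xy}^2f(\xb,\yb)w$ and $\nabla_{yy}^2f(\xb,\yb)w+\nabla^2(\lb^Tg)(\yb)w$ (the latter because the directional coderivative of $N_\Gamma$ carries the curvature of the active constraints weighted by $\lb$), I obtain the two gradient equations, and collecting the index-set constraints finishes the proof.

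The main obstacle I expect is Step~3: getting the \emph{directional} limiting coderivative of $N_\Gamma$ with the correct second-order term $2\nabla^2(\eta^Tg)(\yb)\vb$ and the precise nesting of index sets. This requires the full strength of the directional normal-cone calculus of \cite{GfrYe19}; the 2-nondegeneracy assumption is what prevents the directional multiplier set $\Lb(\vb)$ from causing the second-order term to blow up, and verifying that the limiting passage preserves exactly the claimed inclusions among $\J^+,\J,{\cal I}^+,{\cal I}$ is the delicate bookkeeping. The remaining steps are essentially a translation of Theorem~6 of \cite{GfrYe19} to the present notation, so I would state them briefly and cite that result for the underlying directional analysis.
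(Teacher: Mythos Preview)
The paper does not give a proof of this theorem at all: it is simply quoted from \cite[Theorem~6]{GfrYe19} and stated without argument. Your proposal therefore cannot be compared against any proof in the present paper---there is none---and your final sentence (``state them briefly and cite that result'') is in fact exactly what the paper does, except that the paper omits even the sketch.

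That said, your outline is a reasonable summary of the strategy one finds in \cite{GfrYe19}: rewrite (MPEC) as $0\in\Phi(x,y)$, use MSCQ to get the M-stationarity inclusion, and then expand the (directional) limiting coderivative of $N_\Gamma$ in terms of the problem data under the 2-nondegeneracy hypothesis. One small inaccuracy: the Slater condition is \emph{not} assumed in this subsection---$g$ is only assumed convex (or affine) with MSCQ for the lower-level system, and the nonemptiness of $\Lb$ comes from that, not from MFCQ. Also, the directional coderivative formula and the precise nesting of index sets are not consequences of ``reducing $N_\Gamma$ to the normal cone of $\R^q_-$ composed with $g$'' alone; they require the full directional limiting normal-cone analysis developed in \cite{GfrOut16a,GfrOut16b,GfrYe19}, which is substantially more involved than a composition rule. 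But since the present paper is content to cite the result, these are issues for the cited source rather than gaps relative to this paper.
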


 In the case where the multiplier set $\Lb=\{\lb\}$ is a singleton, the 2-nondegeneracy condition holds automatically and the $\eta$ in the optimality condition becomes zero. In this case we have the following result.
\begin{corollary}(\cite[Corollary 1]{GfrYe19})\label{uniqm} \ Assume that $(\xb,\yb)$ is a local minimizer for  problem (MPEC) fulfilling MSCQ at $(\xb,\yb)$ and the lower level multiplier is unique, i.e., $\bar{\Lambda}=\{\lb\}$.
Then there are a critical direction $\vb\in\KbG$, index sets  ${\cal I}^+$
  with $\bar J^+(\lb)\subseteq {\cal I}^+\subseteq \bar I(\vb)$ and elements $w\in\R^m$, $\xi\in\R^q$ such that
  \begin{eqnarray}
 &&0=\nabla_x F(\xb,\yb)-\nabla_{xy}^2 f(\xb,\yb)w, \nonumber \\
 &&0=\nabla_y F(\xb,\yb)-\nabla_{yy}^2f(\xb,\yb)w -\nabla^2(\lb^Tg)(\yb)w+\nabla g(\yb)^T\xi,\nonumber\\
 && \xi_i=0 \mbox{ if } i\not\in \bar I(\vb), \label{newEqOptCondIndex_c}\\
&& \xi_i\geq 0, \nabla g_i(\yb)^T w\leq 0 \mbox{ if } i\in \bar I(\vb)\setminus{\cal I}^+,\nonumber \\
 && \nabla g_i(\yb)^T w=0 \mbox{ if } i\in {\cal I}^+.\label{newEqOptCondIndex_e}
  \end{eqnarray}
    \end{corollary}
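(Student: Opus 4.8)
The plan is to obtain Corollary~\ref{uniqm} as the direct specialization of Theorem~7 to the case where the lower level multiplier set is the singleton $\bar\Lambda=\{\lb\}$. First I would observe that when $\bar\Lambda=\{\lb\}$, the directional multiplier set $\Lb(v)=\argmax\{v^T\nabla^2(\lambda^Tg)(\yb)v\mv \lambda\in\bar\Lambda\}$ is automatically the singleton $\{\lb\}$ for every $v\in\KbG$; consequently $\Span(\Lb(v)-\Lb(v))=\{0\}$, so by the remark following Definition~\ref{two-nond} the map $g$ is $2$-nondegenerate in every nonzero critical direction $v\in\KbG$. Hence the hypotheses of Theorem~7 are met automatically once MSCQ is assumed, and Theorem~7 applies.

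Next I would trace through the index sets. Since $\bar J^+(\Lb)=\bar J^+(\lb)$ and $\bar J^+(\Lb(\vb))=\bar J^+(\lb)$ in the singleton case, the chain $\bar J^+(\lb)\subseteq \J^+\subseteq\J\subseteq \bar J^+(\Lb(\vb))\subseteq \bar J^+(\Lb)\subseteq {\cal I}^+\subseteq {\cal I}\subseteq \bar I(\vb)$ collapses: $\J^+=\J=\bar J^+(\lb)$, while ${\cal I}^+$ and ${\cal I}$ remain free subject to $\bar J^+(\lb)\subseteq{\cal I}^+\subseteq{\cal I}\subseteq\bar I(\vb)$. I would then argue that the multiplier $\eta$ produced by Theorem~7 must vanish: the conditions $\eta_i=0$ for $i\notin\J$ and $\eta_i\geq0$ for $i\in\J\setminus\J^+$ force $\eta$ to be supported on $\J^+=\bar J^+(\lb)$, i.e.\ on the strongly active indices; combined with $\nabla g(\yb)^T\eta=0$ and the linear independence of $\{\nabla g_i(\yb)\mv i\in\bar J^+(\lb)\}$ (which holds because $\lb$, being the unique element of $\bar\Lambda$, is an extreme point of $\bar\Lambda$), this yields $\eta=0$. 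With $\eta=0$ the term $2\nabla^2(\eta^Tg)(\yb)\vb$ drops out of the second stationarity equation, leaving exactly \eqref{newEqOptCondIndex_c}--\eqref{newEqOptCondIndex_e}; and since ${\cal I}$ can be taken to equal $\bar I(\vb)$ (enlarging ${\cal I}$ only relaxes the constraint $\xi_i=0$ for $i\notin{\cal I}$, which is consistent with $\xi_i=0$ for $i\notin\bar I(\vb)$), the remaining conditions on $\xi$ match those stated in the corollary, with the single free index set ${\cal I}^+$ satisfying $\bar J^+(\lb)\subseteq{\cal I}^+\subseteq\bar I(\vb)$.

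The main obstacle is the argument that $\eta=0$, since it is the one place where the singleton hypothesis does genuine work beyond bookkeeping: one must verify that the linear independence of the strongly active gradients $\{\nabla g_i(\yb)\mv i\in\bar J^+(\lb)\}$ does follow from uniqueness of the multiplier. This is exactly the extreme-point characterization recalled just before Theorem~6 in the excerpt: $\lambda\in\bar\Lambda$ is an extreme point iff $\{\nabla g_i(\yb)\mv i\in\bar J^+(\lambda)\}$ is linearly independent, and a singleton convex set consists of a single extreme point, so $\lb\in\bar{\cal E}$ and the required independence holds. Everything else is a routine specialization of the index chains and a cancellation of the now-vanishing $\eta$-dependent term, so I would keep the proof to a few lines citing Theorem~7 and this extreme-point fact.
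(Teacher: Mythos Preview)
Your proposal is correct and follows essentially the same approach as the paper, which simply remarks (in the sentence preceding the corollary) that when $\bar\Lambda=\{\lb\}$ the 2-nondegeneracy condition holds automatically and the multiplier $\eta$ in Theorem~7 becomes zero. Your justification of $\eta=0$ via the extreme-point characterization of $\bar{\cal E}$ is a welcome elaboration of what the paper leaves implicit.
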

 Actually we can show that the stationary condition in Theorem \ref{uniqm} is stronger than the M-stationary condition for (MPCC).  Suppose that $(\xb,\yb,\lb)$ satisfies the stationary condition in Theorem \ref{uniqm} and let $w\in\R^m$, $\xi\in\R^q$  be those found in Theorem  \ref{uniqm}. Then since 
 $$i\not \in  \bar I(\vb) \Longleftrightarrow  \begin{array}{ll}
\mbox{ either } g_i(\bar y)=0, \nabla g_i(\bar y)^T \bar v<0, \bar{\lambda}_i=0\\
\mbox{ or }  g_i(\bar y)<0, \bar{\lambda}_i=0 \end{array},$$
\eqref{newEqOptCondIndex_c} and \eqref{newEqOptCondIndex_e}
imply that   $\xi_i =0$ if $\bar{\lambda}_i=0$ and
$\nabla  g_i(\bar y) ^T w=0 $ if   $\bar \lambda_i >0$. 
It follows that \eqref{EqOptCond-MPCC_c}, \eqref{EqOptCond-MPCC_d}, \eqref{EqOptCond-MPCC_f} hold.
Therefore $(\xb,\yb, \bar \lambda )$ must satisfy the M-stationary condition for (MPCC) as well.
Hence in the case where the lower level multiplier is unique, the above stationary condition is in general stronger than the M-stationary condition of (MPCC). 

Finally in the rest of this section, we will discuss the S-stationary condition for (MPEC).
Let 
$$\bar{\cal N}:= \{ v\in \mathbb{R}^m\mv \nabla g_i(\yb)^T v=0 \  \forall i\in \bar{I}\}$$ be the nullspace of gradients of constraints active at $\yb$. Define for each $v\in \bar{\cal N}$, the sets
\begin{eqnarray*}
\bar{\cal W}(v)&:=&\left \{w\in \bar{K}_\Gamma \mv w^T \nabla^2 ((\lambda^1-\lambda^2)^T g)(\yb)v=0, \forall \lambda^1,\lambda^2 \in \bar{\Lambda}(v)\right \},\\
\tilde{\Lambda}(v)&:=& \left  \{\begin{array}{ll}
\bar{\Lambda}(v) \cap \bar{\cal E} & \mbox{ if } v\not =0\\
conv (\cup_{0\not =u \in \bar{K}_\Gamma} \bar{\Lambda}(u) \cap \bar{\cal E} ) &\mbox{ if } v=0, \bar{K}_\Gamma \not =\{0\},
\end{array} \right.,
\end{eqnarray*}
and for each $w\in \bar{K}_\Gamma $,
$$ \bar{L}(v;w):= \left \{ \begin{array}{ll}
\{-\nabla^2 (\lambda^T g)(\yb) w \mv \lambda \in \tilde{\Lambda}(v)\} +{\bar{K}_\Gamma}^\circ & \mbox{ if } \bar{K}_\Gamma \not = \{0\}\\
\mathbb{R}^m & \mbox{ if } \bar{K}_\Gamma  = \{0\}
\end{array} \right  . .$$ 
The following theorem is a slight improvement of \cite[Theorem 8]{GfrOut16b}  in that the assumption is weaker. 
\begin{theorem} Assume that $(\xb,\yb)$ is a local minimizer for  problem (MPEC) fulfilling MSCQ  at $(\xb,\yb)$ and the generalized linear independence constraint qualification holds:
\begin{equation*}\label{EqSuffS_Stat}\nabla P(\xb,\yb)\R^{n+m} +\Lsp\left(T_{\Gr N_\Gamma}\big(P(\xb,\yb)\big)\right)=\R^{2m}
,\end{equation*}
where 
$P(x,y):=(y, -\nabla_yf(x,y))$ and $\Lsp(C)$ is the generalized linearity space of set $C$ as defined in Definition 8.
Then $(\xb,\yb)$ is an S-stationary point for (MPEC), i.e., there exists elements $w $ such that
  \begin{eqnarray}
 &&0=\nabla_x F(\xb,\yb)+\nabla_{xy}^2 f(\xb,\yb)w, \nonumber \\
 &&0\in\nabla_y F(\xb,\yb)+\nabla_{yy}^2f(\xb,\yb)w +\widehat{D} {N}_\Gamma(\yb,-\nabla_y f(\xb,\yb))(w).\label{S-sc}
 \end{eqnarray}
  In particular, we have $w\in - \bigcap_{v\in \bar{\cal N}} \bar{\cal W}(v)$ and  
  \begin{eqnarray*}
 &&0=\nabla_x F(\xb,\yb)+\nabla_{xy}^2 f(\xb,\yb)w, \nonumber \\
 &&0\in\nabla_y F(\xb,\yb)+\nabla_{yy}^2f(\xb,\yb)w +\bigcap_{v\in \bar{I}(v) }\bar{L}(v;-w).
  \end{eqnarray*}
    \end{theorem}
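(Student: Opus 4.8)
The plan is to derive the S-stationary condition by combining the abstract multiplier rule of Theorem~1 (applied to the generalized equation reformulation) with a precise description of the regular normal cone to $\Gr N_\Gamma$ at the reference point. First I would observe that (MPEC) is an instance of problem \eqref{op} with $\Phi(x,y):=(P(x,y),y)-\Gr N_\Gamma$ essentially, or more precisely that the feasibility constraint $0\in\nabla_yf(x,y)+N_\Gamma(y)$ is equivalent to $P(\xb,\yb)=(\yb,-\nabla_yf(\xb,\yb))\in\Gr N_\Gamma$. Under MSCQ, Theorem~1 (or directly Theorem~3) gives M-stationarity. To upgrade this to S-stationarity one needs the multiplier $\eta$ to lie in the \emph{regular} normal cone $\widehat N_{\Gr N_\Gamma}(P(\xb,\yb))$ rather than merely the limiting one; equivalently, the coderivative in \eqref{S-sc} should be the Fr\'echet coderivative $\widehat D^*N_\Gamma$.

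The key mechanism is the generalized linear independence condition $\nabla P(\xb,\yb)\R^{n+m}+\Lsp(T_{\Gr N_\Gamma}(P(\xb,\yb)))=\R^{2m}$. The plan here is: (i) write the limiting normal cone $N_{\Gr N_\Gamma}(P(\xb,\yb))$ as a union over "branches" indexed by index sets between $\bar J^+(\lb)$-type sets and $\bar I(\vb)$-type sets, using the directional/second-order analysis of $\Gr N_\Gamma$ recalled before Theorem~6 and in the definitions of $\bar{\cal W}(v)$, $\tilde\Lambda(v)$, $\bar L(v;w)$; (ii) show that when one pulls back through $\nabla P(\xb,\yb)^T$ and intersects with the requirement coming from $\nabla F$, the surjectivity-modulo-linearity-space condition forces the active multiplier to sit in the single branch that is a subspace — i.e.\ the generalized linearity space of the tangent cone — which is exactly the regular normal cone direction. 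This is the standard "LICQ-type condition selects the S-branch" argument: if a sum of a subspace (the image of $\nabla P$) and the lineality space of the tangent cone fills $\R^{2m}$, then any normal vector annihilating the image lives in the polar of that lineality space, and a short separation/polarity computation shows it must come from the regular normal cone. Then I would translate the abstract inclusion $0\in\nabla F(\xb,\yb)+\widehat D^*N_\Gamma(\yb,-\nabla_yf(\xb,\yb))(w)$ into the componentwise form \eqref{S-sc}, and finally specialize $\widehat D^*N_\Gamma$ using the explicit formulas to obtain $w\in-\bigcap_{v\in\bar{\cal N}}\bar{\cal W}(v)$ together with the $\bar L(v;-w)$ inclusion.

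I expect two steps to be the main obstacles. The first and harder one is the passage from the limiting to the regular normal cone: one must verify that the generalized linear independence hypothesis, which is stated in terms of $T_{\Gr N_\Gamma}$ and its lineality space, genuinely rules out every non-regular branch in the union representation of $N_{\Gr N_\Gamma}$. This requires knowing that $\Lsp(T_{\Gr N_\Gamma}(P(\xb,\yb)))$ equals the intersection of all the branch-subspaces (or at least contains the "largest" branch), which is where the precise structural results on $\Gr N_\Gamma$ from \cite{GfrOut16b} are indispensable — this is why the theorem is advertised as a slight improvement of \cite[Theorem~8]{GfrOut16b}, the improvement presumably being that the hypothesis is phrased via the lineality space rather than a stronger full-rank condition. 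The second obstacle is bookkeeping: matching the abstract multiplier $w$ and $\eta$-data to the explicit sets $\bar{\cal W}(v)$, $\tilde\Lambda(v)$, $\bar L(v;w)$, and checking that the "In particular" conclusion really follows — in particular that $w$ lies in $-\bigcap_{v\in\bar{\cal N}}\bar{\cal W}(v)$, which encodes the second-order condition $w^T\nabla^2((\lambda^1-\lambda^2)^Tg)(\yb)v=0$ across all directional multipliers. I would handle this by invoking the coderivative formula for $\widehat D^*N_\Gamma$ from \cite{GfrOut16b} essentially verbatim and then doing the routine index-set chase, citing the earlier results rather than re-deriving them.
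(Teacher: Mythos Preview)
Your plan contains a genuine gap in the ``upgrade from M- to S-stationarity'' step. You propose to first obtain M-stationarity via Theorem~1/3 and then argue that the generalized LICQ condition forces the limiting normal vector to lie in the regular normal cone $\widehat N_{\Gr N_\Gamma}(P(\xb,\yb))$. But generalized LICQ does \emph{not} imply that $\Gr N_\Gamma$ is normally regular at $P(\xb,\yb)$, and your proposed polarity argument does not work: knowing that $\nabla P(\xb,\yb)\R^{n+m}+\Lsp(T_{\Gr N_\Gamma})=\R^{2m}$ and that some $\eta\in N_{\Gr N_\Gamma}$ satisfies $\nabla P^T\eta=-\nabla F$ does not force $\eta\in\widehat N_{\Gr N_\Gamma}$. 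Your description of the regular normal cone as ``the single branch that is a subspace --- i.e.\ the generalized linearity space of the tangent cone'' is also incorrect; $\widehat N_D$ is the polar of $T_D$, not its lineality space, and in general is not one of the branches in the union decomposition of $N_D$.

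The paper avoids this detour entirely. The point you are missing is that the \emph{basic} Fermat rule $0\in\nabla F(\xb,\yb)+\widehat N_{\cal F}(\xb,\yb)$, with ${\cal F}=\{(x,y):P(x,y)\in\Gr N_\Gamma\}$, holds at any local minimizer without any constraint qualification --- the Fr\'echet normal cone of the feasible set is already there. What MSCQ together with generalized LICQ buys you is a \emph{chain rule for the regular normal cone of the preimage}: by \cite[Theorem~4]{GfrOut16b} one has $\widehat N_{\cal F}(\xb,\yb)=\nabla P(\xb,\yb)^T\widehat N_{\Gr N_\Gamma}(P(\xb,\yb))$. Combining this with Fermat gives \eqref{S-sc} directly, with no passage through the limiting normal cone. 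The ``In particular'' part then follows exactly as you outline, by invoking the inclusion for $\widehat N_{\Gr N_\Gamma}(P(\xb,\yb))$ from \cite[Proposition~5]{GfrOut16b}. So the role of generalized LICQ is not to collapse branches of $N_D$ but to make the regular-normal-cone chain rule valid; once you reorganize your argument around that, the proof is essentially two citations.
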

\begin{proof} Since $(\xb,\yb)$ is a local minimizer for  problem (MPEC) which can be rewritten as 
\begin{eqnarray*}
({\rm MPEC})~~~~~~\min_{x,y} && F(x,y)\nonumber\\
{\rm s.t.} &&  P(x,y):=(y, -\nabla_yf(x,y))\in D:=\Gr N_\Gamma,
\end{eqnarray*}
by the basic optimality condition 
$$0\in \nabla F(\xb,\yb) +\widehat{N}_{\cal F}(\bar x,\bar y),$$
where ${\cal F}:=\{(x,y): P(x,y)\in D\}$.
By \cite[Theorem 4]{GfrOut16b}, under MSCQ and the generalized LICQ, 
$$\widehat{N}_{\cal F}(\bar x,\bar y)= \nabla P(\xb,\yb)^T \widehat N_D(P(\xb,\yb))$$
holds.  It follows that the S-stationary condition
$$0\in \nabla F(\xb,\yb)+\nabla P(\xb,\yb)^T \widehat N_D(P(\xb,\yb))$$ holds. Since
$$\nabla P(\xb,\yb)^T \widehat N_D(P(\xb,\yb))=\left \{ \left (\begin{array}{l}
\nabla_{xy}^2 f(\xb,\yb)w\\
\nabla_{yy}^2 f(\xb,\yb)w+w^*\end{array}\right ) | (w^*,-w) \in \widehat{N}_{\Gr N_\Gamma}(\bar y, -\nabla_yf(\bar x,\bar y)) \right \},
$$ and by definition of the co-derivative, 
$$ (w^*,-w) \in \widehat{N}_{\Gr N_\Gamma}(\bar y, -\nabla_yf(\bar x,\bar y)) \Longleftrightarrow w^* \in \widehat{D}  N_\Gamma(\bar y, -\nabla_yf(\bar x,\bar y))(w),$$
(\ref{S-sc}) follows.
By Gfrerer and Outrata \cite[Proposition  5]{GfrOut16b}, we have
\begin{eqnarray*}
\widehat{N}_D(P(\xb,\yb))&=& T_D(P(\xb,\yb))^\circ\\
&\subseteq &\left \{ (w^*,w)\mv w\in -\bigcap_{v\in \bar{\cal N}} \bar{\cal W}(v), w^*\in \bigcap_{v\in \bar{\cal N} }\bar{L}(v;-w) \right \}.
\end{eqnarray*}

\end{proof}

\section{Bilevel program with nonconvex lower level program}
In this section we consider the general  bilevel program (BP) as stated in the introduction and assume that 
 $F:\mathbb{R}^n\times \mathbb{R}^m \rightarrow \mathbb{R}$, $G:\mathbb{R}^n\times \mathbb{R}^m \rightarrow \mathbb{R}^p$ and  $H:\mathbb{R}^n\times \mathbb{R}^m \rightarrow \mathbb{R}^q$ are continuously differentiable, 
 $f:\mathbb{R}^n\times \mathbb{R}^m \rightarrow \mathbb{R}$, $g:\mathbb{R}^n\times \mathbb{R}^m \rightarrow \mathbb{R}^r, h:\mathbb{R}^n\times \mathbb{R}^m \rightarrow 
 \mathbb{R}^s$ are twice continuously differentiable  in variable $y$.

In the bilevel programming literature, in particular in early years, the  first order approach has been popularly used even when the lower level is nonconvex. 
But if the lower level program $(P_x)$ is not convex, 
the optimality condition $$0\in \nabla_y f(x,y) +\widehat{N}_{\Gamma(x)}(y)$$ is only  necessary but not sufficient for $y\in S(x)$. 
That is, the inclusion
$$S(x) \subseteq \left \{y| 0\in \nabla_y f(x,y) +\widehat{N}_{\Gamma(x)}(y)\right \}$$
may be strict. However,  It was pointed out by Mirrlees \cite{Mirrlees99} that an optimal solution of the bilevel program may not even be a stationary point of the reformulation by the first order approach.

Ye and Zhu \cite{YeZhuOp}  proposed to investigate the optimality condition  based on the value function reformulation first proposed by Outrata \cite{Outrata}. By the value function approach, one would  replace the original bilevel program  (BP) by the following equivalent problem:
\begin{eqnarray*}
({\rm VP})~~~~~~\min && F(x,y)\nonumber\\
{\rm s.t.} && f(x,y)-V(x)\leq 0,\\
&& g(x,y)\leq 0, h(x,y)=0,\\
&&  G(x,y)\leq 0,\ H(x,y)= 0,
\end{eqnarray*}
where 
$$V(x):=\inf_{y'} \{ f(x,y')\mv  g(x,y')\leq 0, h(x,y')=0\}$$ 
is  the value function of the lower level program.

There are two issues involved in using the value function approach. 
First, 
problem (VP) is a nonsmooth optimization problem since the value function $V(x)$ is in general nonsmooth. Moreover it is an implicit function of problem data. 
To ensure the lower semi-continuity of the value function we need the following assumption.
\begin{definition}(see \cite[Hypothesis 6.5.1]{Clarke} or \cite[Definition 3.8]{Lei-Lin-Ye-Zhang}  We say that the restricted inf-compactness holds around $\xb$ if $V(\xb)$ is finite and there exist a compact $\Omega$ and a positive number $\epsilon_0$ such that, for all $x\in \mathbb{B}_{\epsilon_0}(\xb)$ for which $V(x)<V(\xb)+\epsilon_0$, the problem $(P_x)$ has a solution in $\Omega$.
\end{definition}
The restricted inf-compactness condition is very weak. It does not even require the existence of solutions of problem $(P_x)$  for all $x $ near $\bar x$. A sufficient condition for the restricted inf-compactness to hold around $\bar x$ is the inf-compactness condition:
there exist $\alpha>0,\delta>0$ and a bounded set $C$ such that $\alpha >V(\bar x)$ and
$$\{ y | g(x,y) \leq 0, h(x,y)=0, f(x,y)\leq \alpha, x\in \mathbb{B}_\delta(\bar x)\} \subseteq C.$$


To ensure the Lipschitz continuity of the value function, we also need the following regularity condition.
\begin{definition}For $\yb\in S(\xb)$, we say that $(\xb,\yb)$ is quasi-normal if there is no nonzero vector $(\lambda^g,\lambda^h)$ such that 
$$ 0=\nabla g(\xb,\yb)^T \lambda^g+ \nabla h(\xb,\yb)^T \lambda^h,\quad  \lambda^g\geq 0$$
and there exists $(x^k,y^k)\rightarrow (\xb,\yb)$ such that
\begin{eqnarray*}
&& \lambda_i^g >0 \Rightarrow \lambda_i^g g_i(x^k,y^k) >0,\\
&& \lambda_i^h \not =0 \Rightarrow \lambda_i^h h_i(x^k,y^k) >0.
\end{eqnarray*}
\end{definition}
It is easy to see that the quasinormality is weaker than  MFCQ:
there is no nonzero vector $(\lambda^g,\lambda^h)$ such that 
\begin{eqnarray*}
&&0=\nabla g(\xb,\yb)^T \lambda^g+ \nabla h(\xb,\yb)^T \lambda^h,\\
&& 0\leq -g(\xb,\yb)\perp \lambda^g\geq 0.
\end{eqnarray*}
Now we can state a sufficient condition which ensures the Lipschitz continuity of the value function and an upper estimate for the limiting subdifferential of the value function. Since MFCQ is stronger than the quasi-normality and the set of quasi-normal multipliers is smaller than the classical multipliers, the following estimate is sharper and holds under weaker conditions than the classical counterpart in \cite[Corollary 1 of Theorem 6.5.2]{Clarke}.
\begin{proposition} \label{Prop2} \cite[Corollary 4.8]{Lei-Lin-Ye-Zhang} Assume that the restricted inf-compactness holds around $\xb$ and for each $\yb\in S(\xb)$, 
$(\xb,\yb)$ is quasi-normal. Then the value function $V(x)$ is Lipschitz continuous around $\xb$ with 
\begin{equation}
\partial V(\xb) \subseteq \widetilde W(\bar x) , \label{valuef} \end{equation}
where  
\begin{equation}\label{W}
\widetilde W(\bar x):=\bigcup_{\yb\in S(\xb)} \left \{ \nabla_x f(\xb,\yb)+\nabla_x g(\xb,\yb)^T \lambda^g+ \nabla_x h(\xb,\yb)^T \lambda^h: (\lambda^g,\lambda^h) \in {\cal M}(\xb,\yb)\right \},
\end{equation} where   $S(\bar x)$ denotes the solution set of the lower level program $(P_{\bar x})$
and ${\cal M}(\xb,\yb)$ is the set of quasi-normal multipliers, i.e., 
$$ {\cal M}(\xb,\yb) :=\left \{(\lambda^g,\lambda^h) \big | \begin{array}{l}
 0= \nabla_y f(\xb,\yb)+\nabla_y g(\xb,\yb)^T \lambda^g+ \nabla_y h(\xb,\yb)^T \lambda^h,\   \lambda^g\geq 0 \\
 \mbox{  there exists } (x^k,y^k)\rightarrow (\xb,\yb) \mbox{ such that } \\
 \lambda_i^g >0 \Rightarrow \lambda_i^g g_i(x^k,y^k) >0,\\
 \lambda_i^h \not =0 \Rightarrow \lambda_i^h h_i(x^k,y^k) >0 \end{array}  \right \} .$$
In addition to the above assumptions, if 
$\widetilde W(\bar x)=\{\zeta\},$ then $V(x)$ is strictly differentiable at $\bar x$ and $\nabla V(\bar x)=\{\zeta\}$.
\end{proposition}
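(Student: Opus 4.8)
The plan is to reduce the statement to two standard ingredients --- a local error bound for the lower-level constraint system, which is the content of quasi-normality, and the enhanced Fritz--John necessary conditions --- and then to recover multipliers lying in $\widetilde W(\xb)$ by a normalization/limit passage. Throughout write $C:=\{(x,y)\mv g(x,y)\le 0,\ h(x,y)=0\}$.

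First I would record the qualitative facts. Restricted inf-compactness makes $V(\xb)$ finite, and the local error bound for $C$ at each $(\xb,\yb)$, $\yb\in S(\xb)$ --- a consequence of quasi-normality there --- makes $\Gamma(x)$ nonempty for $x$ near $\xb$ and controls ${\rm dist}(\,\cdot\,,\Gamma(x))$; together with continuity of $f,g,h$ this yields that $V$ is continuous at $\xb$, that along any $x^k\to\xb$ problem $({\rm P}_{x^k})$ eventually attains its value at some $y^k$ in the fixed compact set $\Omega$, and that every cluster point of such $y^k$ lies in $S(\xb)$. The same error bound together with restricted inf-compactness upgrades continuity to local Lipschitz continuity of $V$ near $\xb$ via the now-standard perturbation-of-a-minimizer argument (cf.\ \cite[Theorem 6.5.2]{Clarke}); in particular $\partial V(\xb)$ is nonempty and compact, so the last assertion will follow once (\ref{valuef}) is proved.

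For (\ref{valuef}), take $v\in\partial V(\xb)$; by definition there are $x^k\to\xb$ with $V(x^k)\to V(\xb)$ and $v^k\in\widehat\partial V(x^k)$ with $v^k\to v$. For large $k$ pick a minimizer $y^k\in\Omega$ of $({\rm P}_{x^k})$; passing to a subsequence, $y^k\to\yb\in S(\xb)$. Since $v^k\in\widehat\partial V(x^k)$ and $f\ge V$, the point $(x^k,y^k)$ is, up to an $o(\|x-x^k\|)$ error, a local minimizer of $(x,y)\mapsto f(x,y)-\langle v^k,x\rangle$ over $C$. Applying the enhanced Fritz--John conditions for $C$ at $(x^k,y^k)$ produces $(\mu_0^k,\lambda^{g,k},\lambda^{h,k})\ne 0$ with $\mu_0^k\ge 0$, $\lambda^{g,k}\ge 0$,
\[\mu_0^k\big(v^k-\nabla_xf(x^k,y^k)\big)=\nabla_xg(x^k,y^k)^T\lambda^{g,k}+\nabla_xh(x^k,y^k)^T\lambda^{h,k},\]
\[-\mu_0^k\nabla_yf(x^k,y^k)=\nabla_yg(x^k,y^k)^T\lambda^{g,k}+\nabla_yh(x^k,y^k)^T\lambda^{h,k},\]
together with sequences $(x^{k,j},y^{k,j})\to(x^k,y^k)$ as $j\to\infty$ realizing $\lambda^{g,k}_i>0\Rightarrow\lambda^{g,k}_ig_i(x^{k,j},y^{k,j})>0$ and $\lambda^{h,k}_i\ne 0\Rightarrow\lambda^{h,k}_ih_i(x^{k,j},y^{k,j})>0$. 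Normalizing $(\mu_0^k,\lambda^{g,k},\lambda^{h,k})$ to unit norm and passing to a convergent subsequence gives a unit-norm limit $(\mu_0,\lambda^g,\lambda^h)$ with $\mu_0\ge0$, $\lambda^g\ge0$, and --- choosing $j=j(k)$ diagonally --- a single sequence $(\tilde x^k,\tilde y^k)\to(\xb,\yb)$ realizing the sign conditions for $(\lambda^g,\lambda^h)$. If $\mu_0=0$, the two equations pass to the limit to produce a nonzero $(\lambda^g,\lambda^h)$ with $0=\nabla g(\xb,\yb)^T\lambda^g+\nabla h(\xb,\yb)^T\lambda^h$, $\lambda^g\ge0$, and the alignment property, contradicting quasi-normality of $(\xb,\yb)$. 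Hence $\mu_0>0$; setting $\bar\lambda^g:=\lambda^g/\mu_0$, $\bar\lambda^h:=\lambda^h/\mu_0$ and passing to the limit, the second equation becomes $0=\nabla_yf(\xb,\yb)+\nabla_yg(\xb,\yb)^T\bar\lambda^g+\nabla_yh(\xb,\yb)^T\bar\lambda^h$, the first becomes $v=\nabla_xf(\xb,\yb)+\nabla_xg(\xb,\yb)^T\bar\lambda^g+\nabla_xh(\xb,\yb)^T\bar\lambda^h$, and $\bar\lambda^g\ge0$ with the alignment sequence persists; hence $(\bar\lambda^g,\bar\lambda^h)\in{\cal M}(\xb,\yb)$ and $v\in\widetilde W(\xb)$ by (\ref{W}).

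For the addendum: since $V$ is locally Lipschitz, $\partial V(\xb)\ne\emptyset$, so $\widetilde W(\xb)=\{\zeta\}$ and (\ref{valuef}) force $\partial V(\xb)=\{\zeta\}$, hence $\partial^cV(\xb)={\rm conv}\,\partial V(\xb)=\{\zeta\}$; by Clarke's criterion a function Lipschitz near a point with singleton Clarke subdifferential there is strictly differentiable there, giving $\nabla V(\xb)=\zeta$. I expect the real work to lie not in the calculus above but in the two inputs used as black boxes --- the enhanced Fritz--John conditions, and (more delicately) the local Lipschitz continuity of $V$, delicate precisely because quasi-normality is not a stable property, so the error bound must be propagated to points near $S(\xb)$ by a compactness argument --- and in the bookkeeping of the doubly indexed witnessing sequences through the diagonal passage, which is what makes the limiting multipliers land in ${\cal M}(\xb,\yb)$ rather than in the larger set of classical KKT multipliers.
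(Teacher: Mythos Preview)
The paper does not give its own proof of this proposition; it is stated with a citation to \cite[Corollary~4.8]{Lei-Lin-Ye-Zhang} and no argument is supplied. So there is nothing in the paper to compare your proposal against.

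That said, your sketch follows the standard route used in the cited source and is essentially correct. Two points are worth tightening. First, you apply the enhanced Fritz--John conditions at $(x^k,y^k)$, but $(x^k,y^k)$ is only an \emph{approximate} minimizer of $f(x,y)-\langle v^k,x\rangle$ over $C$, with the $o(\|x-x^k\|)$ defect you note; enhanced Fritz--John requires an exact local minimizer. The usual fix is immediate: the inclusion $(v^k,0)\in\widehat\partial(f+\delta_C)(x^k,y^k)$ together with the variational description of the regular subdifferential says that $(x^k,y^k)$ exactly minimizes $f(x,y)-\langle v^k,x\rangle+\varepsilon_k\|(x,y)-(x^k,y^k)\|$ over $C$ for any $\varepsilon_k\downarrow 0$; apply enhanced Fritz--John to that problem and let the $\varepsilon_k$-term disappear in the limit. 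Second, your appeal to \cite[Theorem~6.5.2]{Clarke} for Lipschitz continuity is stated there under MFCQ, not quasi-normality; what you actually need is that quasi-normality at each $(\xb,\yb)$ with $\yb\in S(\xb)$ implies a local error bound for $C$ there, and then a compactness argument over the (compact) set $S(\xb)\cap\Omega$ to make the error bound uniform near $\{\xb\}\times S(\xb)$. You allude to both of these in your closing paragraph, so the outline is sound; only the bookkeeping needs to be written out.
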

Note that moreover if  the solution map of the lower level program $S(x)$ is semi-continuous at $(\bar x,\bar y)$ for some $\bar y\in S(\bar x)$, then the union $\bigcup_{\yb\in S(\xb)}$ sign can be omitted in (\ref{W}); see \cite[Corollary 1.109]{Mor}.

Secondly, is a local optimal solution of problem (BP) a stationary point of problem (VP)? For problem (VP), suppose that $(\xb,\yb)$ is a local optimal solution and if the value function $V(x)$ is Lipschitz continuous at $\xb$, then the  Fritz John type necessary optimality condition in terms of limiting subdifferential holds. That is, there exist multipliers   $\alpha \geq 0, \mu\geq 0, \lambda^g, \lambda^h, \lambda^G,\lambda^H$  not all equal to zero such that 
\begin{eqnarray*}
&& 0\in \alpha \nabla_x F(\bar x,\bar y) +\mu \partial_x( f-V)(\bar x,\bar y)\\
&& \quad +\nabla_x g(\xb,\yb)^T\lambda^g +\nabla_x h(\yb,\yb)^T\lambda^h+\nabla_x G(\yb,\yb)^T\lambda^G+\nabla_x H(\yb,\yb)^T\lambda^H   ,\\
&& 0\in \alpha \nabla_y F(\bar x,\bar y) +\mu \nabla_y f(\bar x,\bar y)\\
&& \quad +\nabla_y g(\yb,\yb)^T\lambda^g +\nabla_y h(\yb,\yb)^T\lambda^h+\nabla_y G(\yb,\yb)^T\lambda^G+\nabla_y H(\yb,\yb)^T\lambda^H   ,\\
&&  0\leq -g(\xb,\yb)\perp \lambda^g\geq 0, \quad  0\leq -G(\xb,\yb)\perp \lambda^G\geq 0.
\end{eqnarray*}
However it is easy to see that  every feasible solution $(x,y)$ of (VP) must be an optimal solution to the optimization problem
\begin{eqnarray*}
\min_{x',y'} && f(x',y')- V(x') \\
\mbox{ s.t. } &&  g(x',y') \leq 0, h(x',y')=0.
\end{eqnarray*} By Fritz-John type optimality condition, there exists $\tilde \mu\geq 0,\tilde \lambda^g, \tilde \lambda^h$ not all equal to zero such  that 
\begin{eqnarray*}
&& 0\in  \tilde \mu\partial( f-V)(x,y)+\nabla g(x,y)^T \tilde \lambda^g +\nabla h(x,y)^T \tilde \lambda^h\\
&&  0\leq -g(x,y)\perp \tilde \lambda^g\geq 0.
\end{eqnarray*} This means that there always exists a nonzero  abnormal multiplier $(0, \tilde \mu, \tilde \lambda^g, \tilde \lambda^h, 0,0)$ for the problem (VP) at each feasible solution, i.e., the no nonzero abnormal multiplier constraint qualification (NNAMCQ) fails at each feasible point of the problem (VP). Therefore unlike the standard nonlinear programs, we can not derive the KKT condition (i.e., the Fritz John condition when $\alpha=1$) from  lack of nonzero abnormal multipliers. As we can  see that the reason why NNAMCQ fails  is the existence of the value function constraint $f(x,y)-V(x) \leq 0$.
To address this issue,  Ye and Zhu \cite{YeZhuOp}  proposed the following partial calmness condition. 
\begin{definition}Let $(\bar x,\bar y)$ be a local  optimal  solution of problem (VP). We say that (VP) is partially calm at $(\bar x, \bar y)$ provided that there exist $\delta>0, \mu>0$ such that for all $\alpha \in\mathbb{B}_\delta$ and all $(x,y)\in \mathbb{B}_\delta(\bar x,\bar y)$ which are feasible for the partially perturbed problem
\begin{eqnarray*}
({\rm VP}_\alpha)~~~~~~\min && F(x,y)\nonumber\\
{\rm s.t.} && f(x,y)-V(x)+\alpha =0,\\
&& g(x,y)\leq 0, h(x,y)=0,\\
&&  G(x,y)\leq 0,\ H(x,y)= 0,
\end{eqnarray*} there holds
$F(x,y)-F(\bar x,\bar y)+\mu \|\alpha\| \geq 0.$
\end{definition}
It is obvious that the partial calmness is equivalent to the exact penalization, i.e., (VP) is partially calm at $(\bar x, \bar y)$ if and only if for some $\mu>0$,
$(\bar x, \bar y)$ is a local solution of the penalized problem 
\begin{eqnarray*}
(\widetilde{\rm VP})~~~~~~\min && F(x,y)+\mu (f(x,y)-V(x)) \nonumber\\
{\rm s.t.}  && g(x,y)\leq 0, h(x,y)=0,\\
&&  G(x,y)\leq 0,\ H(x,y)= 0.
\end{eqnarray*} 
Since the difficult constraint $f(x,y)-V(x) \leq 0$ is replaced by a penalty in the objective function, the usual constraint qualification such as MFCQ or equivalently NNAMCQ can be satisfied for problem $(\widetilde{\rm VP})$. Consequently using a nonsmooth multiplier rule
for problem $(\widetilde{\rm VP})$, one can derive a KKT type optimality condition for problem (VP). Such an approach has been used to derive necessary optimality condition of (BP) by Ye and Ye in \cite{YeZhuOp}  and later in other papers such as \cite{Dam-Dut-Mor,Dam-Zem,MorNamPhan}. For this approach to work, however, one needs to ensure the partial calmness condition.   
In  \cite{YeZhuOp}, it was shown that for the  minmax problem and the bilevel program where the lower level is completely linear, the partial calmness condition holds automatically. In \cite[Theorem 4.2]{Dam-Zem}, the last result was improved to conclude that the partial calmness condition holds automatically for any bilevel program where for each $x$, the lower level problem is a linear program.   In  \cite[Proposition 5.1]{YeZhuOp}, the uniform weak sharp mimimum is proposed as a sufficient condition for partial calmness and under certain conditions, the bilevel program with a quadratic program as the lower level program is shown to satisfy the partial calmness condition in in \cite[Proposition 5.2]{YeZhuOp} (with  correction in \cite{YeZhuOpcorrection}).

Apart from the issue of constraint qualification, we may ask a question on how likely an optimal solution of (VP) is a stationary point of (VP). 
In the case where there are no upper and lower level  constraints, the stationary condition of (VP) at $(\bar x,\bar y)$ means the
existence of $\mu \geq 0$ such that
\begin{eqnarray*}
&& 0\in \nabla_x F(\bar x,\bar y) +\mu \partial_x( f-V)(\bar x,\bar y),\\
&& 0=\nabla_y F(\bar x,\bar y)+\mu \nabla_y f(\xb,\yb).
\end{eqnarray*}
But this condition is very strong. It will not hold unless $0=\nabla_y F(\bar x,\bar y)$.

 As  suggested by Ye and Zhu in \cite{yz2},  we may consider the combined program
\begin{eqnarray*}
({\rm CP})~~~~~~\min_{x,y,u,v}  && F(x,y)\nonumber\\
{\rm s.t.} && f(x,y) -V(x)\leq 0,\\
&& 0=\nabla_y L(x,y,u,v):=\nabla_y f(x,y)+\nabla_y g(x,y)^Tu+\nabla_y h(x,y)^Tv ,\\
&& h(x,y)=0,\quad  0\leq -g(x,y)\perp u\geq 0,\\
 && G(x,y)\leq 0,\ H(x,y)= 0.
\end{eqnarray*}
 The motivation is clear since if the KKT conditions hold at each optimal solution of the lower level problem, then the KKT condition is a redundant condition. By adding the KKT condition we have not changed the feasible region of (BP). Note that this reformulation requires the existence of the KKT condition at the optimal solution of the lower level program; see \cite{Dam-Dut} for examples where the KKT condition does not hold at a lower level optimal solution.

Similarly as in the case of using MPCC to reformulate a bilevel program,  when the lower level multipliers are not unique, it is possible that $(\bar x, \bar y, \bar u,\bar v)$ is a local solution of (CP) but 
 $(\bar x, \bar y)$ is not a local optimal solution of (BP).
 
Due to the existence of the value function constraint $f(x,y)-V(x) \leq 0$, similarly to the analysis with problem (VP), 
 NNAMCQ will never hold at a feasible solution of (CP) and hence in  \cite{yz2}  the following partial calmness condition for problem (CP) is suggested as a condition to deal with the problem. 
 \begin{definition}Let $(\bar x,\bar y,\bar u,\bar v)$ be a local  optimal  solution of problem (CP). We say that (CP) is partially calm at $(\bar x, \bar y,\bar u,\bar v)$ provided that there exist $ \mu>0$ such that $(\bar x, \bar y,\bar u,\bar v)$ is a local solution of  the partially perturbed problem:
\begin{eqnarray*}
({\rm CP}_\mu)~~~~~~\min && F(x,y) +\mu (f(x,y)-V(x)) \nonumber\\
{\rm s.t.} 
&& 0=\nabla_y L(x,y,u,v),\\
&& h(x,y)=0,\quad  0\leq -g(x,y)\perp u\geq 0,\\
&&  G(x,y)\leq 0,\ H(x,y)= 0.
\end{eqnarray*}
\end{definition}
 Since there are more constraints in (CP) than in (VP), the partial calmness for (CP) is a weaker condition than the one for (VP).

 Given a feasible vector $(\bar x,\bar y,\bar u,\bar v)$ of the problem (CP),  define the following index sets:
 \begin{eqnarray*}
 && I_G=I_G(\bar x,\bar y):=\{i: G_i(\bar x,\bar y)=0\},\\
 &&I_g=I_g(\bar x,\bar y,\bar u):=\{i: g_i(\bar x,\bar y)=0, \bar u_i >0\},\\
 &&I_0=I_0(\bar x,\bar y,\bar u):=\{i: g_i(\bar x,\bar y)=0, \bar u_i =0\},\\
 &&I_u=I_u(\bar x,\bar y,\bar u):=\{i: g_i(\bar x,\bar y)<0, \bar u_i=0\}.
\end{eqnarray*}
 \begin{definition}[M-stationary condition for (CP) based on the value function]
A feasible point $(\bar x,\bar y,\bar u,\bar v)$ of problem (CP)  is called an
M-stationary point  based on the value function  if  there exist $\mu\geq 0$, $\beta\in \mathbb{R}^s$, $\lambda^G\in \mathbb{R}^p$, $\lambda^H\in \mathbb{R}^q$, $\lambda^g\in \mathbb{R}^m$, $\lambda^h\in \mathbb{R}^n$ such that the following conditions hold:
\begin{eqnarray*}
&&0\in \partial  F(\bar x,\bar y)+\mu \partial(f-V)(\bar x,\bar y)
+  \nabla G(\bar x,\bar y)^T\lambda^G+ \nabla H(\bar x,\bar y)^T \lambda^H \nonumber\\
&&~~~+  \nabla_{x,y} (\nabla_{y}L)(\bar x,\bar y)^T \beta
+ \nabla g(\bar x,\bar y)^T \lambda^g+ \nabla h(\bar x,\bar y)^T \lambda^h,\label{mpecn1}\\
&& \lambda_i^G \geq 0 \ \  i \in I_G,\ \  \lambda_i^G = 0 \ \  i \notin I_G,\\
&&\lambda_i^g=0 \ \  i\in I_u, \ (\nabla_y g(\bar x,\bar y)\beta)_i=0 \ \  i \in I_g, \label{mpecn2}\\
&&{either}\quad \lambda_i^g> 0, (\nabla_y g(\bar x,\bar y)\beta)_i>  0,\quad \mbox{or}\quad\lambda_i^g (\nabla_y g(\bar x,\bar y)\beta)_i=0 \quad    i \in I_0. \nonumber
\end{eqnarray*}
\end{definition}
In \cite[Theorem 4.1]{yz2}, it was shown that under the partial calmness condition and certain constraint qualifications,   a local optimal solution of (CP) must be an M-stationary  point based on the value function provided the value function is Lipschitz continuous. 
 
 Recently Xu and Ye \cite{XuYe}  introduced a nonsmooth version of the relaxed constant positive linear dependence (RCPLD) condition and apply it to (CP). We now describe the RCPLD condition.

In the following definition, we rewrite all equality constraints of problem (CP) by the equality constraint below:
$$0=\Phi(x,y,u,v):=\left (\begin{array}{c} \nabla_y L(x,y,u,v)\\
h(x,y)\\ H(x,y)
\end{array}  \right ).$$
We denote by $\{0\}^n$ the zero vector in $\mathbb{R}^n$ and by $e_i$  the unit vector with the $i$ th component  equal to $1$.
\begin{definition} Suppose that the value function $V(x)$ is Lipschitz continuous at $\bar x$.  Let $(\bar x,\bar y,\bar u,\bar v)$ be a feasible  solution of $(\rm CP)$. We say that  RCPLD holds at $(\bar x,\bar y,\bar u,\bar v)$ if the following conditions hold.
\begin{itemize}
\item[{\rm (I)}]
The vectors 
\begin{eqnarray*}
\{\nabla\Phi_i(x,y,u,v)\}_{i=1}^{m+s+q}\cup \{\nabla g_i(x,y)\times \{0^{r+s}\}\}_{i\in I_g}\cup \{(0^{n+m}, e_i,0^s)\}_{i\in I_u}
\end{eqnarray*}
 have the same rank 
 for all $(x,y,u,v)$ in a  neighbourhood of $(\bar x,\bar y,\bar u,\bar v)$.
\item[{\rm (II)}] \   Let  $\mathcal{I}_1\subseteq \{1,\cdots,m+s+q\}$, $\mathcal{I}_2\subseteq I_g$,  $\mathcal{I}_3\subseteq I_u$ be such that  the set of vectors $\{\nabla \Phi_i(\bar x,\bar y,\bar u,\bar v)\}_{i\in \mathcal{I}_1}\cup \{\nabla g_i(\bar x,\bar y)\times \{0\}\}_{i\in \mathcal{I}_2}\cup \{(0, e_i,0)\}_{i\in \mathcal{I}_3}$ 
 is a basis for  
 \begin{eqnarray*}
 &&  \mbox{ span }\large  \{\ \nabla \Phi_i(x,y,u,v)\}_{i=1}^{m+s+q}\cup \{\nabla g_i(\bar x,\bar y)\times \{0^{r+s}\}\}_{i\in I_g}\cup \{(0^{n+m}, e_i,0^s)\}_{i\in I_u} \large \}.\end{eqnarray*}
  For any index sets  $\mathcal{I}_4\subseteq I_G,\mathcal{I}_5,\mathcal{I}_6\subseteq I_0 $, the following  conditions hold.
  \begin{itemize}
  \item[(i)]
If  there exists a nonzero vector $(\lambda^V,\lambda^\Phi,\lambda^G, \lambda^g,\lambda^u) \in \mathbb{R}\times\mathbb{R}^{m+s+q}\times \mathbb{R}^{p}\times\mathbb{R}^r\times\mathbb{R}^r$ satisfying $\lambda^V\geq 0$, $\lambda^G \geq 0$
and $\mbox{either } \lambda_i^g> 0, \lambda_i^u>  0 \mbox{ or }\lambda_i^g\lambda_i^u=0, \forall i \in I_0$, $\xi^* \in \partial(f-V)(\bar x,\bar y)$  such that 
\begin{eqnarray*}
&&0 =  \lambda^V \xi ^* +  \sum_{i\in \mathcal{I}_1} \lambda_i^\Phi \nabla \Phi_i(\bar x,\bar y,\bar u,\bar v) +\sum_{i\in\mathcal{I}_4} \lambda_i^G \nabla G_i(\bar x,\bar y)\times \{0^{r+s}\} \\
&&
 +\sum_{i\in\mathcal{I}_2\cup\mathcal{I}_5} \lambda_i^g \nabla g_i(\bar x,\bar y)\times \{0^{r+s}\}
 -\sum_{i\in\mathcal{I}_3\cup\mathcal{I}_6}\lambda_i^u  (0^{n+m}, e_i, 0^s)
\end{eqnarray*} 
and $(x^k,y^k,u^k,v^k, \xi^k)\rightarrow (\bar x,\bar y,\bar u,\bar v,\xi^*)$ as $k\rightarrow \infty$, $\xi^k \in \partial (f-V)(x^k, y^k)$
then the set of vectors 
\begin{eqnarray*}
&& \{\xi^k\}\cup \{\nabla \Phi_i(x^k,y^k,u^k,v^k)\}_{i\in \mathcal{I}_1}\cup \{\nabla G_i(x^k,y^k)\times \{0^{r+s}\} \}_{i\in \mathcal{I}_4} \\
&&  \cup \{\nabla g_i(x^k,y^k)\times \{0^{r+s}\}\}_{i\in \mathcal{I}_2\cup\mathcal{I}_5}\cup\{(0^{n+m}, e_i, 0^s)\}_{i\in \mathcal{I}_3\cup\mathcal{I}_6},
\end{eqnarray*} where $k$ is sufficiently large and $(x^k,y^k, u^k,v^k) \neq (\bar x, \bar y, \bar u, \bar v)$,
is linearly dependent. 
\item[(ii)]\  If  there exists a nonzero vector $(\lambda^\Phi,\lambda^G, \lambda^g,\lambda^u) \in \mathbb{R}^{m+s+q}\times \mathbb{R}^{p}\times\mathbb{R}^r\times\mathbb{R}^r$ satisfying  $\lambda^G \geq 0$
and $\mbox{either } \lambda_i^g> 0, \lambda_i^u>  0 \mbox{ or }\lambda_i^g\lambda_i^u=0, \forall i \in I_0$ such that 
\begin{eqnarray*}
&&0 =   \sum_{i\in \mathcal{I}_1} \lambda_i^\Phi \nabla \Phi_i(\bar x,\bar y,\bar u,\bar v) +\sum_{i\in\mathcal{I}_4} \lambda_i^G \nabla G_i(\bar x,\bar y)\times \{0^{r+s}\} \\
&&
 +\sum_{i\in\mathcal{I}_2\cup\mathcal{I}_5} \lambda_i^g \nabla g_i(\bar x,\bar y)\times \{0^{r+s}\}
 -\sum_{i\in\mathcal{I}_3\cup\mathcal{I}_6}\lambda_i^u  (0^{n+m}, e_i, 0^s),
\end{eqnarray*} 
and  $(x^k,y^k,u^k,v^k)\rightarrow (\bar x,\bar y,\bar u,\bar v)$, as $k\rightarrow \infty$.
Then the set of vectors 
\begin{eqnarray*}
&&  \{\nabla \Phi_i(x^k,y^k,u^k,v^k)\}_{i\in \mathcal{I}_1}\cup \{\nabla G_i(x^k,y^k)\times \{0^{r+s}\} \}_{i\in \mathcal{I}_4} \\
&&  \cup \{\nabla g_i(x^k,y^k)\times \{0^{r+s}\}\}_{i\in \mathcal{I}_2\cup\mathcal{I}_5}\cup\{(0^{n+m}, e_i, 0^s)\}_{i\in \mathcal{I}_3\cup\mathcal{I}_6},
\end{eqnarray*} where $k$ is sufficiently large and $(x^k,y^k, u^k,v^k) \neq (\bar x, \bar y, \bar u, \bar v)$,
is linearly dependent.
\end{itemize}

\end{itemize}
\end{definition} Since 
\begin{eqnarray*}
\partial(f-V)(\bar x,\bar y) & \subseteq &\partial^c(f-V)(\bar x,\bar y)\\
&  =& \nabla f(\bar x,\bar y) -\partial^c V(\bar x )\times \{0\} \subseteq  \nabla f(\bar x,\bar y)  -conv \widetilde W(\bar x)\times \{0\},\end{eqnarray*}
where $\widetilde W(\bar x)$ is the upper estimate of the limiting subdifferential of the value function at $\bar x$ defined as in  (\ref{W}),
 we can replace the set $\partial(f-V)(\bar x,\bar y)$ by its upper estimate $\nabla f(\bar x,\bar y)  -conv \widetilde W(\bar x)\times \{0\}$ in RCPLD and obtain a sufficient condition for RCPLD. Moreover if  the solution map of the lower level program $S(x)$ is semi-continuous at $(\bar x,\bar y)$, then the set $\partial(f-V)(\bar x,\bar y)$ can be replaced by its upper estimate $\nabla f(\bar x,\bar y)  - \widetilde W(\bar x)\times \{0\}$.

\begin{theorem} \cite{XuYe}Let $(\bar x,\bar y,\bar u,\bar v)$ be a local solution of $(\rm CP)$ and suppose that the value function $V(x)$ is Lipschitz continuous at $\bar x$.
If 
{RCPLD holds at } $(\bar x,\bar y,\bar u,\bar v)$, then 
$(\bar x,\bar y,\bar u,\bar v)$ is an M-stationary point of problem (CP) based on the value function. 
\end{theorem}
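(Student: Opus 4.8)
The plan is to adapt to the present nonsmooth, complementarity-constrained setting the classical argument that the (relaxed) constant positive linear dependence condition is a constraint qualification: an exterior-penalty/Ekeland argument producing approximately M-stationary points $z^k:=(x^k,y^k,u^k,v^k)\to\bar z:=(\xb,\yb,\ub,\vb)$, followed by a Carathéodory-type reduction of the multipliers and a boundedness dichotomy in which the unbounded alternative is excluded by RCPLD. First I would rewrite (CP) with the single Lipschitz inequality $f(x,y)-V(x)\le 0$, the smooth equality $\Phi(x,y,u,v)=0$, the smooth inequality $G(x,y)\le 0$, and the complementarity pair $0\le -g(x,y)\perp u\ge 0$; near $\bar z$ the latter reduces, on the index sets $I_g,I_0,I_u,I_G$ introduced before the statement, to $u_i=0$ on $I_u$, $g_i(x,y)=0$ on $I_g$, and the combinatorial ``either/or'' constraint on $I_0$. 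Note that $(f-V)(\xb,\yb)=0$ since $\yb\in S(\xb)$.

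For a sequence $\rho_k\uparrow\infty$ I would minimize, over a fixed small closed ball $B(\bar z;\delta)$ on which $\bar z$ solves (CP),
\[
F(z)+\|z-\bar z\|^2+\tfrac{\rho_k}{2}\Big[\big((f(x,y)-V(x))^+\big)^2+\|\Phi(z)\|^2+\|G(x,y)^+\|^2+\psi(z)\Big],
\]
where $\psi$ is a penalty of the complementarity constraint chosen so that its subdifferential, in the limit, forces the strong M-stationarity sign pattern on $I_0$ (a Scholtes/Steffensen--Ulbrich-type relaxation, or a componentwise $\min$-type term, should do this). A compactness/continuity argument using $(f-V)(\bar z)=0$ and local optimality of $\bar z$ gives minimizers $z^k\in\inn B(\bar z;\delta)$ with $z^k\to\bar z$, and the generalized Fermat rule at $z^k$ — using the Clarke chain rule for the $C^1$-outer/Lipschitz-inner composition, together with appropriate fuzzy/limiting calculus so as to land an element $\xi^k\in\partial(f-V)(x^k,y^k)$ rather than merely in $\partial^c(f-V)(x^k,y^k)$ — yields approximate multipliers $\lambda^{V,k}\ge 0$, $\lambda^{\Phi,k}$, $\lambda^{G,k}\ge 0$ supported on $I_G$, and complementarity multipliers $(\lambda^{g,k},\lambda^{u,k})$ obeying in the limit the required sign conditions, such that the corresponding weighted sum of $\xi^k$ and of the constraint gradients at $z^k$ equals $-\nabla F(z^k)-2(z^k-\bar z)$.

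Next I would invoke condition (I) of RCPLD to replace, for each large $k$, the part of this combination coming from $\{\nabla\Phi_i\}_{i=1}^{m+s+q}\cup\{\nabla g_i\times\{0\}\}_{i\in I_g}\cup\{(0,e_i,0)\}_{i\in I_u}$ by a combination over a \emph{fixed} basis $\mathcal{I}_1\cup\mathcal{I}_2\cup\mathcal{I}_3$ (the rank being constant along $z^k$, so this basis stays linearly independent near $\bar z$ by continuity), and then apply Carathéodory's lemma, modulo that basis span, to the remaining nonnegatively combined gradients indexed by $I_G\cup I_0$ to shrink their support to some subset $\mathcal{I}_4\cup\mathcal{I}_5\cup\mathcal{I}_6$ that, together with the basis, is linearly independent at $z^k$; passing to a subsequence, the index sets may be taken independent of $k$. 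If the thus-reduced multiplier vector, together with $\lambda^{V,k}$, stays bounded, a subsequence converges and the limit is precisely an M-stationary point of (CP) based on the value function, with $\mu=\lambda^V$, $\beta$ the component of $\lambda^\Phi$ dual to $\nabla_yL$, and $\lambda^h,\lambda^H,\lambda^g,\lambda^G$ read off from the remaining multipliers; this proves the theorem. If instead it is unbounded, I would normalize by its norm and pass to the limit: the $\nabla F(z^k)$ and $2(z^k-\bar z)$ terms vanish, so one obtains a \emph{nonzero} vector $(\lambda^V,\lambda^\Phi,\lambda^G,\lambda^g,\lambda^u)$ with $\lambda^V\ge 0$, $\lambda^G\ge 0$, the $I_0$ sign pattern, and — when $\lambda^V>0$ — an accompanying $\xi^*\in\partial(f-V)(\xb,\yb)$ with $\xi^k\to\xi^*$ (when $\lambda^V=0$ one lands in the second branch), satisfying exactly the hypothesis of RCPLD condition (II)(i) or (II)(ii) along $z^k$. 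RCPLD then forces the gradient family at $z^k$ supported on $\mathcal{I}_1,\dots,\mathcal{I}_6$ (plus $\xi^k$ in case (i)) to be linearly dependent for large $k$, contradicting the linear independence arranged in the basis/Carathéodory step. Hence only the bounded alternative can occur.

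The step I expect to be the main obstacle is precisely this last contradiction: keeping the bookkeeping of the index-set partitions consistent across the penalized-problem optimality condition, the basis/Carathéodory reduction, and RCPLD's quantifiers, while keeping the value-function term at the level of the \emph{limiting} subdifferential $\partial(f-V)$ (the penalty naturally yields Clarke subgradients, so a semicontinuity/fuzzy-calculus step is needed to downgrade to limiting ones, and one must also control the carried-along direction $\xi^k$ in case (i)), and choosing $\psi$ so that the limiting multipliers obey the strong ``either $\lambda_i^g>0,\ (\nabla_y g(\xb,\yb)\beta)_i>0$ or $\lambda_i^g(\nabla_y g(\xb,\yb)\beta)_i=0$'' condition on $I_0$ rather than a weaker C-type relation. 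If a direct complementarity penalty proves awkward, an alternative is to run the whole argument branchwise — fixing, for each $i\in I_0$, whether $g_i=0$ or $u_i=0$, obtaining a smooth NLP per branch on which $\bar z$ is locally optimal, applying the reduction there, and recombining — which matches the role of the free subsets $\mathcal{I}_5,\mathcal{I}_6\subseteq I_0$ in the definition of RCPLD.
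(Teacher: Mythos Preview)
The paper does not prove this theorem: it is stated with attribution to the preprint \cite{XuYe} and no argument is given in the present paper, so there is no in-paper proof against which to compare your attempt.

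On its own merits, your outline is exactly the standard route one expects the cited reference to take: produce approximately stationary points $z^k\to\bar z$ by an exact-penalty/Ekeland device, use condition~(I) of RCPLD to pass the equality-type block to a fixed basis $\mathcal I_1\cup\mathcal I_2\cup\mathcal I_3$ along the sequence, apply a positive-basis/Carath\'eodory reduction to the remaining (sign-constrained) multipliers to obtain linear independence over $\mathcal I_1,\dots,\mathcal I_6$ at $z^k$, and run the bounded/unbounded dichotomy, invoking (II)(i) when $\lambda^V>0$ and (II)(ii) when $\lambda^V=0$ to reach a contradiction in the unbounded branch. Your identification of the two delicate points --- staying at the limiting subdifferential $\partial(f-V)$ rather than $\partial^c(f-V)$, and obtaining the genuine M-sign pattern on $I_0$ --- is accurate.

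One caution worth making explicit: a quadratic exterior penalty (or a Scholtes-type relaxation) of the complementarity block typically delivers only C-stationarity, not M-stationarity, on $I_0$ in the limit. The cleaner device is the one you sketch at the end: keep the complementarity system as a hard constraint (equivalently, work with the abstract constraint $(-g_i(x,y),u_i)\in\Omega:=\{(a,b):a,b\ge 0,\ ab=0\}$), so that the Fermat rule at each $z^k$ produces multipliers in $\widehat N_\Omega$ at nearby points, and the limit lands in $N_\Omega$ at $\bar z$ --- which is exactly the M-stationarity sign pattern. Along such a sequence each $i\in I_0(\bar z)$ falls into $I_g(z^k)$, $I_u(z^k)$, or $I_0(z^k)$, which is precisely how the free subsets $\mathcal I_5,\mathcal I_6\subseteq I_0$ in condition~(II) are meant to arise. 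With that modification the argument goes through.
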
 
In the following result, the value function constraint $f(x,y)-V(x)\leq 0$ is not needed in the verification.
\begin{theorem}\cite{XuYe} Let $(\bar x,\bar y,\bar u,\bar v)$ be a local solution of $(\rm CP)$ and suppose that the value function $V(x)$ is Lipschitz continuous at $\bar x$.
If the rank of the matrix 
$$J^*=\left [\begin{array}{cccc}
\nabla  (\nabla_y f+\nabla_y g^T\bar u+\nabla_y h^T\bar v)(\bar x,\bar y) & \nabla_y h(\bar x,\bar y)^T & \nabla_{y} g_{I_g\cup I_0}(\bar x,\bar y)^T \\
\nabla h(\bar x,\bar y)  &0 &0\\
\nabla H(\bar x,\bar y)  &0&0 \\
\nabla g_{I_g}(\bar x,\bar y)  & 0&0
\end{array}\right ]$$
is equal to $m+n+r+s-|I_u|$. 
Then 
{RCPLD holds and}
$(\bar x,\bar y,\bar u,\bar v)$ is an M-stationary point of problem (CP) based on the value function. 
\end{theorem}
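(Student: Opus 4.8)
The plan is to derive RCPLD at $(\xb,\yb,\ub,\vb)$ from the rank hypothesis; the statement then follows at once, since $V$ is Lipschitz continuous at $\xb$ and the preceding theorem says RCPLD $\Rightarrow$ M-stationarity based on the value function for (CP). The key observation to isolate is that \emph{the full column rank of $J^*$ is equivalent to the family of gradient vectors appearing in the definition of RCPLD spanning all of $\R^{n+m+r+s}$ at $(\xb,\yb,\ub,\vb)$}; once the whole ambient space is spanned, conditions (I) and (II) of RCPLD are essentially automatic.

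First I would recognise $J^*$ as a Jacobian. Using that the complementarity $0\le -g\perp u\ge 0$ at $(\xb,\yb,\ub,\vb)$ partitions $\{1,\dots,r\}$ into $I_g\cup I_0\cup I_u$, one checks that, after reordering coordinates, $J^*$ is the Jacobian at $(\xb,\yb,\ub,\vb)$ — with the block $u_{I_u}$ held at $0$ — of the map $(x,y,v,u_{I_g\cup I_0})\mapsto\big(\nabla_y L(x,y,u,v),\,h(x,y),\,H(x,y),\,g_{I_g}(x,y)\big)$, so that $J^*$ has $n+m+s+|I_g|+|I_0|=m+n+r+s-|I_u|$ columns. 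Let $\mathcal V$ denote the family $\{\nabla\Phi_i(\xb,\yb,\ub,\vb)\}_{i=1}^{m+s+q}\cup\{\nabla g_i(\xb,\yb)\times\{0^{r+s}\}\}_{i\in I_g}\cup\{(0^{n+m},e_i,0^s)\}_{i\in I_u}$ from part (I) of the RCPLD definition, and let $W:=\Span\{(0^{n+m},e_i,0^s):i\in I_u\}$, a subspace of $\Span\mathcal V$ of dimension $|I_u|$. Passing to the quotient $\R^{n+m+r+s}/W$ simply deletes the $u_{I_u}$-coordinates and carries the elements of $\mathcal V$ onto the rows of $J^*$ (the vectors $(0^{n+m},e_i,0^s)$, $i\in I_u$, dying in the quotient), whence $\dim\Span\mathcal V=|I_u|+{\rm rank\,}J^*$. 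So $\,{\rm rank\,}J^*=m+n+r+s-|I_u|$ is exactly $\Span\mathcal V=\R^{n+m+r+s}$. I expect this identification — pinning down the block/coordinate correspondence between $J^*$ and $\mathcal V$ and tracking which $u$-coordinates survive — to be the one place that needs care; the rest is elementary.

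Assuming $\Span\mathcal V=\R^{n+m+r+s}$, condition (I) follows because $\mathcal V$ then contains $n+m+r+s$ linearly independent vectors at $(\xb,\yb,\ub,\vb)$, and the corresponding $(n+m+r+s)\times(n+m+r+s)$ determinant is continuous and nonzero there, hence nonzero throughout a neighbourhood; so $\mathcal V$ still spans $\R^{n+m+r+s}$ nearby and its rank is constantly $n+m+r+s$. For condition (II), fix a basis $\{\nabla\Phi_i\}_{i\in\mathcal I_1}\cup\{\nabla g_i\times\{0\}\}_{i\in\mathcal I_2}\cup\{(0,e_i,0)\}_{i\in\mathcal I_3}$ of $\Span\mathcal V$, so $|\mathcal I_1|+|\mathcal I_2|+|\mathcal I_3|=n+m+r+s$; by the same determinant argument it remains a basis of $\R^{n+m+r+s}$ at every point near $(\xb,\yb,\ub,\vb)$. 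In case (II)(i) the family whose linear dependence is to be checked always consists of this basis together with the additional vector $\xi^k$, hence of at least $1+(n+m+r+s)$ vectors in $\R^{n+m+r+s}$, and is therefore linearly dependent for every $k$ — so (II)(i) holds with nothing to assume. In case (II)(ii): if $\mathcal I_4\cup\mathcal I_5\cup\mathcal I_6\neq\emptyset$ the family in question again has more than $n+m+r+s$ vectors and is dependent; while if $\mathcal I_4=\mathcal I_5=\mathcal I_6=\emptyset$ the postulated vanishing linear combination involves only the basis vectors, which are linearly independent, forcing the multiplier that produces it to be trivial and contradicting its assumed non-triviality — so this subcase does not arise. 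Hence (II) holds in all cases.

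Combining the two steps, RCPLD holds at $(\xb,\yb,\ub,\vb)$; the preceding theorem, together with the Lipschitz continuity of $V$ at $\xb$, then gives that $(\xb,\yb,\ub,\vb)$ is an M-stationary point of (CP) based on the value function, which is the assertion.
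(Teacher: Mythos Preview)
The paper does not supply its own proof of this theorem; the result is simply quoted from the preprint \cite{XuYe}, so there is no argument in the paper to compare against.

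Your proof is correct and is the natural one. The identification of $J^*$ with the Jacobian of $(x,y,v,u_{I_g\cup I_0})\mapsto(\nabla_y L,h,H,g_{I_g})$ is accurate, and the quotient argument showing $\dim\Span\mathcal V=|I_u|+\operatorname{rank}J^*$ is clean: the unit vectors $(0^{n+m},e_i,0^s)$, $i\in I_u$, are independent and lie in $\Span\mathcal V$, and modulo them the remaining vectors of $\mathcal V$ are exactly the rows of $J^*$. Hence full column rank of $J^*$ is equivalent to $\Span\mathcal V=\R^{n+m+r+s}$. From there condition (I) is immediate by continuity of a nonvanishing $(n{+}m{+}r{+}s)\times(n{+}m{+}r{+}s)$ minor, and condition (II) reduces to the pigeonhole observations you give: in (II)(i) the family always contains at least $n{+}m{+}r{+}s{+}1$ vectors; in (II)(ii) either $\mathcal I_4\cup\mathcal I_5\cup\mathcal I_6\neq\emptyset$ and the same count applies, or the hypothesis forces a nontrivial relation among the chosen basis vectors, which is impossible. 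The final appeal to the preceding theorem then yields M-stationarity. The only place requiring care, as you note, is matching the block structure of $J^*$ to the coordinates $(x,y,u,v)$ and checking which $u$-components survive the quotient; you have this right.
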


In the last part of this section we briefly summarize some necessary optimality conditions obtained in \cite{y11} using the combined approach.
For any given $\bar x$,  define the set
\begin{eqnarray*}
\lefteqn{ W(\bar x)}\\
&&:=\bigcup_{\yb\in S(\xb)} \left \{ \nabla_x f(\xb,\yb)+\nabla_x g(\xb,\yb)^T \lambda^g+ \nabla_x h(\xb,\yb)^T \lambda^h: \begin{array}{l}
0=\nabla_y g(\xb,\yb)^T \lambda^g+ \nabla_y h(\xb,\yb)^T \lambda^h\\
  0\leq -g(x,y)\perp \lambda^g \geq 0
  \end{array}  \right \}.
\end{eqnarray*}
It is easy to see that $ \widetilde W(\bar x) \subseteq W(\bar x)$ and under the assumption made in Proposition \ref{Prop2}, it  is an upper estimate of the limiting subdifferential of the value function. 
 \begin{definition} Let $(\bar x,\bar y, \bar u, \bar v)$ be a feasible solution to (CP). We say that (CP) is weakly calm at $(\bar x,\bar y, \bar u, \bar v)$ with modulus $\mu>0$ if
 $$ [\nabla F(\bar x,\bar y)+\mu \nabla f(\bar x,\bar y)]^T (d_x,d_y)-\mu  \min_{ \xi \in  W(\bar x)} \xi d_x \geq 0 \quad \forall d \in {\cal L}^{MPEC} ((\bar x,\bar y, \bar u, \bar v); \widetilde{F}), $$
 where $ \widetilde{F}$ is the feasible region of problem $({\rm CP}_\mu)$ and ${\cal L}^{MPEC} ((\bar x,\bar y, \bar u, \bar v); \widetilde{F})$ is the MPEC linearized cone of $ \widetilde{F}$ defined by
 \begin{eqnarray*}
 \lefteqn{{\cal L}^{MPEC} ((\bar x,\bar y, \bar u, \bar v); \widetilde{F})}\\
 &&:=\left \{ (d_x,d_y,d_u,d_v) |\begin{array}{ll}
 \nabla_{x,y} (\nabla_y L)(\bar x,\bar y,\bar u,\bar v) (d_x,d_y) +\nabla_y g(\bar x,\bar y)^T d_u+\nabla_y h(\bar x,\bar y)^T d_v=0 &\\
 \nabla G_i (\bar x,\bar y)^T (d_x,d_y) \leq 0, & i\in I_G\\
  \nabla H_i (\bar x,\bar y)^T (d_x,d_y) = 0,\\
 \nabla g_i(\bar x,\bar y)^T (d_x,d_y) =0,& i\in I_g\\
  (d_u)_i=0, &  i\in I_u \\
   \nabla g_i(\bar x,\bar y)^T (d_x,d_y) \cdot (d_u)_i =0,  \nabla g_i(\bar x,\bar y)^T (d_x,d_y) \leq 0, (d_u)_i\geq 0 & i \in I_0.
 \end{array}
  \right \}.
 \end{eqnarray*}
 
 \end{definition} 
 \begin{definition}[M-stationary condition for (CP) based on an upper estimate]
A feasible point $(\bar x,\bar y,\bar u,\bar v)$ of problem (CP)  is called an
M-stationary point based on  an upper estimate  if there exist $\mu\geq 0$, $\beta\in \mathbb{R}^s$, $\lambda^G\in \mathbb{R}^p$, $\lambda^H\in \mathbb{R}^q$, $\lambda^g\in \mathbb{R}^m$, $\lambda^h\in \mathbb{R}^n$ such that the following conditions hold:
\begin{eqnarray*}
&&0\in \partial  F(\bar x,\bar y)+\mu [\nabla f(\bar x,\bar y) -conv W(\bar x)\times \{0\}]
+  \nabla G(\bar x,\bar y)^T\lambda^G+ \nabla H(\bar x,\bar y)^T \lambda^H \nonumber\\
&&~~~+  \nabla (\nabla_{y} f+ \nabla_{y} g^T \bar u 
+\nabla_{y} h^T \bar v )(\bar x,\bar y)^T \beta
+ \nabla g(\bar x,\bar y)^T \lambda^g+ \nabla h(\bar x,\bar y)^T \lambda^h,\\
&& \lambda_i^G \geq 0 \ \  i \in I_G,\ \  \lambda_i^G = 0 \ \  i \notin I_G,\\
&&\lambda_i^g=0 \ \  i\in I_u, \ (\nabla_y g(\bar x,\bar y)\beta)_i=0 \ \  i \in I_g,\\
&&{either}\quad \lambda_i^g> 0, (\nabla_y g(\bar x,\bar y)\beta)_i>  0,\quad \mbox{or}\quad\lambda_i^g (\nabla_y g(\bar x,\bar y)\beta)_i=0 \quad    i \in I_0. \nonumber
\end{eqnarray*}
\end{definition}
Let $(\bar x,\bar y,\bar u,\bar v)$ be a feasible solution of problem (CP). By \cite[Theorem 4.3]{y11}, if the set $ W(\bar x)$ is nonempty and compact and (CP) is MPEC-weakly calm at $(\bar x,\bar y,\bar u,\bar v)$, then $(\bar x,\bar y,\bar u,\bar v)$ is an M-stationary point of problem (CP)  based on  an upper estimate. Note that it is obvious that the M-stationary condition based on an upper estimate is weaker than the corresponding M-stationary condition based on the value function.
\begin{acknowledgement}
The research of this author was partially
supported by NSERC.  The author would like to thank an  anonymous referee for the helpful suggestions and comments that have helped to improve the presentation of the paper.
\end{acknowledgement}
\end{document}